\documentclass[preprint,12pt,3p]{elsarticle}
\usepackage{amsmath,amsthm,amsfonts,amssymb}
\usepackage{hyperref}
\usepackage{xcolor}
\usepackage{sectsty}
\usepackage{enumitem}
\definecolor{darkslategray}{rgb}{0.18, 0.31, 0.31}
\definecolor{warmblack}{rgb}{0.0, 0.26, 0.26}
\definecolor{astral}{RGB}{46,116,181}
\subsectionfont{\color{astral}}
\sectionfont{\color{astral}}

\linespread{1.18}

\newtheorem{theorem}{Theorem}[section]

\newtheorem{lemma}[theorem]{Lemma}
\newtheorem{definition}[theorem]{Definition}

\newtheorem{example}[theorem]{Example}

\newcommand{\0}{\bf{0}}

\newcommand{\C}{{\mathbb C}}
\newcommand{\mc}[1]{\mathcal {#1}}
\newcommand{\dg}{{\dagger}}

\newcommand{\n}{{*_N}}
\newcommand{\m}{{*_M}}

\newcommand{\lp}{{*_L}}
\newcommand{\1}{{*_1}}
\newcommand{\2}{{*_2}}

\journal{RACSAM}

\begin{document}

\begin{frontmatter}

\title{ \textcolor{warmblack}{\bf On reverse-order law of tensors and its application to additive results on Moore-Penrose inverse}}
\author{Krushnachandra Panigrahy$^\dag$$^a$, Debasisha Mishra$^\dag$$^b$}

\address{               $^{\dag}$Department of Mathematics,\\
                        National Institute of Technology Raipur,\\
                        Raipur, Chhattisgarh, India.\\
                        \textit{E-mail$^a$}: \texttt{kcp.224\symbol{'100}gmail.com }\\
                        \textit{E-mail$^b$}: \texttt{dmishra\symbol{'100}nitrr.ac.in. }
        }

\begin{abstract}
The equality $(\mc{A}\n\mc{B})^\dg = \mc{B}^\dg \n \mc{A}^\dg $ for any two complex tensors $\mc{A}$ and $\mc{B}$ of arbitrary order, is called as the {\it reverse-order law} for the Moore--Penrose inverse of arbitrary order tensors via the Einstein product. Panigrahy {\it et al.} [Linear Multilinear Algebra; 68 (2020), 246-264.] obtained  several necessary and sufficient conditions  to hold the reverse-order law for the Moore--Penrose inverse of even-order tensors via the Einstein product, very recently. This notion is revisited here  among other results. In this context, we  present several new characterizations of the reverse-order law of arbitrary order tensors  via the same product. More importantly, we illustrate a result on the Moore--Penrose inverse of a sum of two tensors as an application of the reverse-order law which leaves an open problem. We recall the definition of the Frobenius norm and the spectral norm to illustrate a result for finding the additive perturbation bounds of the Moore--Penrose inverse under the Frobenius norm.  We conclude our paper with the introduction of the notion of sub-proper splitting for tensors which may help to find an iterative solution of a tensor multilinear system.
\end{abstract}

\begin{keyword}
Tensor,  Moore--Penrose inverse,  Einstein product, Reverse-order law, Perturbation bound, Sub-proper splitting.\\

{\it MSC(2010):}
15A69; 15A09

\end{keyword}

\end{frontmatter}
% \newpage 

\section{Introduction}\label{sec1}
Many phenomena are modeled as multilinear systems in engineering and science such as isotropic and anisotropic elasticity are modeled \cite{lai2009} as multilinear systems in continuum physics and engineering. Tensor methods have been used to solve problems in quantum chemistry:  the fundamental Hatree-Fock equation is solved by Khoromskij, Khoromskaia, and Flad \cite{khoromskij2011} and the works on multidimensional operators in quantum models were done by Beylkin and Mohlenkamp \cite{beylkin2005}, \cite{beylkin2002}. Multidimensional
boundary and eigenvalue problems are solved by Hackbusch and
Khoromskij \cite{hackbush2007}, and Hackbusch, Khoromskij and Tyrtyshnikov \cite{hackbush2005} through separated representation and hierarchical Kronecker tensor from the underlying high spatial dimensions using a reduced low-dimensional tensor-product space.
A {\it tensor} is a multidimensional array. An  {\it $N$th-order tensor} is an element of ${\C}^{I_{1}\times\cdots\times I_{N}}$  which is the set of order $N$ complex tensors. Here  $I_{1}, ~I_{2},~ \cdots,~ I_{N}$ are dimensions of the first, second, $\cdots$, $N$th way, respectively. The {\it order} of a tensor is the number of dimensions. A zero-order tensor is a scalar, a first-order tensor is a vector while  a second-order tensor is a matrix.  Higher-order tensors are tensors of order three or higher. These are denoted by calligraphic letters like  $\mc{A}$. $a_{i j k}$ denotes an $(i,j,k)$th element of a third order tensor $\mc{A}$, and in general, $a_{i_{1}\cdots i_{N}}$ represents an $(i_{1},~\cdots,~i_{N})$th element of an $N$th order tensor $\mc{A}$. For more details, we refer to the recent books \cite{che2020}, \cite{qi2017}, \cite{smilde2004}, \cite{wei2016}  on tensors.\\

There has been active research on tensors for the past four decades.  For applications and tensor-based methods, we refer the readers to the survey papers \cite{khoromskij2012}, \cite{kolda2009}, \cite{lathauwer2009} and the references cited therein. But, research contributions on the theory and applications of generalized inverses of tensors are very little. The first work in this direction was reported in 2016 (see the article \cite{sun2016} by  Sun { \it et al.}) where the authors formally introduced a generalized inverse called the {\it Moore--Penrose inverse of an even-order tensor} via the Einstein product. The authors \cite{sun2016} then used the Moore--Penrose inverse to find the minimum-norm least-squares solution of some multilinear systems.  Behera and Mishra \cite{behera2017} continued the same study and proposed different types of generalized inverses in the next year. They initiated the study of the reverse-order law for different generalized inverses of tensors. In 2017, Jin {\it et al.} \cite{jin2017} again introduced the Moore--Penrose inverse of a tensor using another tensor product called {\it t-product} and applied it to derive the higher-order Gauss-Markov theorem. In the same year, Ji and Wei \cite{ji2017}  extended the Moore--Penrose inverse of an even-order tensor to the {\it weighted Moore--Penrose inverse} of an even-order tensor and established the relation between the minimum-norm least-squares solution of a multilinear system and the weighted Moore--Penrose inverse. In 2018, the same authors introduced another generalized inverse of an even-order square tensor called the {\it Drazin inverse} \cite{ji2018}. They used this inverse to find the Drazin inverse solution of the singular linear tensor equation. In the same year, Sun {\it et al.} \cite{sun2018}  introduced the {\it group inverse} of an even-order square tensor using a general product of tensors.  Very recently, Huang {\it et al.} \cite{huang2019} proposed a {\it tensor-based extreme learning machine}.  They used the Moore--Penrose inverse to obtain the tensor regression result. In 2018, Panigrahy and Mishra \cite{panigrahy2018} and  Stanimirovic {\it et al.} \cite{stanimirovic2018}  simultaneously improved the definition of the Moore--Penrose inverse of an even-order tensor to a tensor of any order via the same product. In 2018, Liang and Zheng \cite{liang2019} also proposed the same definition which appeared in the year 2019.  
The definition of the Moore--Penrose inverse of an arbitrary order tensor is recalled below.
\begin{definition}(Definition 1.1, \cite{panigrahy2018})\label{defmpi}
Let $\mc{X} \in {\C}^{I_{1}\times\cdots\times I_{N} \times J_{1}\times \cdots \times J_{M}}$. The tensor $\mc{Y} \in \\{\C}^{J_{1}\times\cdots\times J_{M} \times I_{1} \times\cdots\times I_{N}}$ satisfying the following four tensor equations:
\begin{eqnarray}
\mc{X}\m\mc{Y}\n\mc{X} &=& \mc{X};\label{mpeq1}\\
\mc{Y}\n\mc{X}\m\mc{Y} &=& \mc{Y};\label{mpeq2}\\
(\mc{X}\m\mc{Y})^{H} &=& \mc{X}\m\mc{Y};\label{mpeq3}\\
(\mc{Y}\n\mc{X})^{H} &=& \mc{Y}\n\mc{X},\label{mpeq4}
\end{eqnarray}
is defined as the \textbf{Moore--Penrose inverse} of $\mc{X}$, and is denoted by $\mc{X}^{\dg}$.
\end{definition}
\noindent In the above definition, $\m$ denotes the {\it Einstein product} \cite{einstein2007} of tensors, and is defined by 
\begin{equation*}\label{Eins}
(\mc{A}\m\mc{B})_{i_{1}\cdots i_{N}k_{1}\cdots k_{L}}
=\displaystyle\sum_{j_{1}\cdots j_{M}}a_{{i_{1}\cdots i_{N}}{j_{1}\cdots j_{M}}}b_{{j_{1}\cdots j_{M}}{k_{1}\cdots k_{L}}}
\end{equation*}
for tensors $\mc{A} \in \mathbb{C}^{I_{1}\times\cdots\times I_{N} \times J_{1}\times\cdots\times J_{M} }$ and $\mc{B} \in \mathbb{C}^{J_{1}\times\cdots\times J_{M} \times K_{1} \times\cdots\times K_{L} }$.
In the case of an even-order invertible tensor, the Definition \ref{defmpi} coincides with the notion of the inverse which was first introduced by Brazell {\it et al.} \cite{brazell2013}. They also showed that such an inverse can be computed using the singular value decomposition of the same tensor (see the celebrated result in Lemma 3.1, \cite{brazell2013}). A generalized inverse of a tensor, as its name indicates, is a generalization of the notion of the tensor inverse. It is a  tensor that exists for a larger class of tensors than the ordinary inverse does; has some properties of the tensor inverse, and for a given even-order square invertible tensor it reduces to the ordinary inverse (see  \cite{wang2018} and \cite{wei2018} for theory of generalized inverses of matrices). The idea of introducing generalized inverses of tensors originates from the necessity of finding a solution of a given system of linear equations (see \cite{behera2017}, \cite{ji2018},  \cite{ji2017}, \cite{sun2016}). \\

This paper is in connection with an open problem stated in the last section of \cite{behera2017} for even-order tensors, and is a continuation of the very recent works done in \cite{panigrahy2020} and \cite{panigrahy2018}.   The same open problem is restated next in the setting of tensors of any order.\\
 
 {\bf Problem 1.} When does  $(\mc{A}\n\mc{B})^\dg = \mc{B}^\dg \n
 \mc{A}^\dg $ for  any two tensors $\mc{A}\in{\C}^{I_{1}\times\cdots \times I_{M}\times J_{1}\times\cdots \times J_{N}}$ and $\mc{B}\in{\C}^{J_{1}\times\cdots \times J_{N}\times K_{1}\times\cdots \times K_{L}}$?
  \\
 
 Note that Panigrahy {\it et al.} \cite{panigrahy2020} attempted first the above Problem 1, very recently, and provided various necessary and sufficient conditions for the same problem, but for even-order tensors only.  Nevertheless, we aim to
  \begin{enumerate}
 \item 
 establish 
 some new necessary and sufficient conditions for the above-stated problem which is well-known as the {\it reverse-order law} for the Moore--Penrose inverse of arbitrary order tensors via the Einstein product,
 
\item compute the Moore--Penrose inverse of the sum of two arbitrary order tensors,

\item find
 the additive perturbation bounds of the Moore--Penrose inverse under the Frobenius norm, and to
 
 \item introduce the concept of sub-proper splittings for tensors without using the notion of the range and null space of a tensor which may help to find an iterative solution of a tensor multilinear system.
  \end{enumerate}
 
In this context, the paper is organized as follows. Section 2  collects various useful definitions and results. The next section which contains all our main results, partially fulfills the above-stated objectives and provides some open problems for future research interest. It has two subsections. The first subsection  deals with the reverse-order law tensors of any order. It also discusses a few properties of different generalized inverses of a tensor, and then illustrates a new expression of the Moore--Penrose inverse of the Einstein product of two tensors in terms of the Einstein product of the Moore--Penrose inverse of a tensor and a generalized inverse of a tensor. The next subsection presents a result which is useful to obtain the additive Moore--Penrose inverse of tensors as an application of the reverse-order law and leaves an open problem. It then attempts the problem of fining the additive perturbation bounds of the Moore--Penrose inverse.  Finally, it provides an answer to the last objective of this paper as mentioned in the previous paragraph before moving to the concluding section.

 \section{Prerequisites}\label{sec2}
Here, we collect all those remaining definitions and earlier results which will be used to prove the main results in the next section.
 We begin with the definition of an identity tensor. A tensor $\mc{I}\in {\C}^{I_{1}\times\cdots\times I_{N}\times I_{1}\times\cdots\times I_{N}}$ with entries  $ (\mc{I})_{i_{1} \cdots i_{N}j_{1}\cdots j_{N}} = \prod_{k=1}^{N} \delta_{i_{k} j_{k}}$ is  called   an {\it identity tensor}  if
$ \delta_{i_{k}j_{k}}= \begin{cases}
 1, \text{ if } i_{k} = j_{k}\\
 0, \text{ otherwise}
 \end{cases}$.
 The {\it conjugate transpose} of a tensor $\mc{A}\in {\C}^{I_{1}\times\cdots\times I_{M}\times J_{1}\times \cdots \times J_{N}}$ is denoted by $\mc{A}^{H}$, and is defined as $(\mc{A}^{H})_{j_{1}\hdots j_{N}i_{1}\hdots i_{M}}=\overline{a}_{i_{1}\hdots i_{M}j_{1}\hdots j_{N}},$ 
where the over-line stands for the  conjugate of $a_{i_{1}\hdots i_{M}j_{1}\hdots j_{N}}$. If the tensor $\mc{A}$ is real, then its {\it transpose} is denoted by $\mc{A}^{T}$, and is defined as $(\mc{A}^{T})_{j_{1}\hdots j_{N}i_{1}\hdots i_{M}}=a_{i_{1}\hdots i_{M}j_{1}\hdots j_{N}}$.  We next present the definition of a unitary and an idempotent tensor. A tensor $\mc{A}\in \mathbb{C}^{I_{1}\times\cdots\times I_{N} \times I_{1}\times\cdots\times I_{N}}$ is {\it unitary} if $\mc{A}\n\mc{A}^{H}=\mc{A}^{H}\n \mc{A}=\mc{I}$, and {\it idempotent}  if $\mc{A}\n\mc{A}= \mc{A}.$ 
If  $\mc{A}=\mc{A}^{H}$ for a tensor $\mc{A}\in\mathbb{C}^{I_{1}\times\cdots\times I_{N} \times I_1 \times\cdots\times
I_N}$, then $\mc{A}$ is {\it Hermitian}. If $\mc{A}= - \mc{A}^{H}$, then it is {\it skew-Hermitian}. Liang and Zheng \cite{liang2019} defined a bijective map $\phi$ from the tensor space ${\C}^{I_{1}\times\cdots\times I_{M}\times J_{1}\times\cdots\times J_{N}}$ into the matrix space ${\C}^{(I_{1}\cdot I_{2}\cdots I_{M})\times (J_{1}\cdot J_{2}\cdots J_{N})}$ by 
\begin{equation}\label{tnsr2mtrx}
    \mc{A}=(\mc{A}_{i_{1}\hdots i_{M} j_{1}\hdots j_{N}})\xrightarrow{\phi}A=(A_{ivec(\mathbf{i},\mathbb{I}),ivec(\mathbf{j},\mathbb{J})}),
\end{equation}
where
\begin{eqnarray*}
    ivec(\mathbf{i},\mathbb{I})&=&i_{1}+\sum_{r=2}^{M}(i_{r}-1)\prod_{u=1}^{r-1}I_{u},~\mathbf{i}:=\{i_{1},~\cdots,~i_{M}\},~\mathbb{I}:=\{I_{1},\cdots,I_{M}\},\\
    ivec(\mathbf{j},\mathbb{J})&=&j_{1}+\sum_{s=2}^{N}(j_{s}-1)\prod_{v=1}^{s-1}J_{v},~\mathbf{j}:=\{j_{1},~\cdots,~j_{N}\},~ \mathbb{J}:=\{J_{1},\cdots,J_{N}\}.
\end{eqnarray*}
Here, $\mathbb{I}:=\{I_{1},\cdots,I_{M}\}$ and $\mathbb{J}:=\{J_{1},\cdots,J_{N}\}$ are respectively referred to as the {\it row mode} and the {\it column mode} of $\mc{A}$. Next, we recall the definition of unfolding rank of a tensor.

\begin{definition}[Definition 3.1, \cite{liang2019}]
For a tensor $\mc{A}\in {\C}^{I_{1}\times \cdots\times I_{M}\times J_{1}\times \cdots\times J_{N}}$, and the map $\phi$ as defined in \eqref{tnsr2mtrx}, the unfolding rank of the tensor $\mc{A}$ is denoted by $rank_{U}(\mc{A})$ and defined as $rank_{U}(\mc{A})=rank(\phi(\mc{A}))$.
Particularly, if $rank_{U}(\mc{A}) = m$
$(rank_{U}(\mc{A}) = n)$, we say that $\mc{A}$ is full row (column) rank, where $m=I_{1}\cdot I_{2}\cdots I_{m}$ and $n=J_{1}\cdot J_{2}\cdots J_{N}$.
\end{definition}

The following results are useful to prove our main results. The first one is about the singular value decomposition (SVD) of a tensor proved
in Theorem 3.17, \cite{brazell2013} for a real tensor. The authors of \cite{sun2016} stated the same result for complex even order tensors. We next present the SVD for arbitrary order tensors.

\begin{lemma}{(Theorem 3.2., \cite{liang2019})}
 A tensor $\mc{A} \in
\mathbb{C}^{I_{1}\times\cdots\times I_{M} \times J_{1} \times\cdots\times
J_{N}}$ with $rank_{U}(\mc{A})=r$
 can be decomposed  as $$\mc{A} = \mc{U}\m\mc{B}\n\mc{V}^{H},$$
 where $\mc{U} \in \mathbb{C}^{I_{1}\times\cdots\times I_{M} \times I_{1} \times\cdots\times I_{M}}$ and
 $\mc{V} \in  \mathbb{C}^{J_{1}\times\cdots\times J_{N} \times J_{1} \times\cdots\times J_{N}}$ are unitary
 tensors, and
 $\mc{B} \in \mathbb{R}^{I_{1}\times\cdots\times I_{M} \times J_{1} \times\cdots\times J_{N}}$ is a diagonal
 tensor defined by
 \begin{equation*}
     \mc{B}_{i_{1}\cdots i_{M}j_{1}\cdots j_{N}}=\begin{cases}
     \sigma_{ivec(\mathbf{i},\mathbb{I})}>0,&\text{if } ivec{(\mathbf{i},\mathbb{I})}=ivec{(\mathbf{j},\mathbb{J})}\in\{1,~2,~\cdots,~r\},\\
     0,&\text{otherwise},
     \end{cases}
 \end{equation*}
 where $\sigma_{1}\geq\sigma_{2}\geq\cdots\geq\sigma_{r}>0$ are singular values of the matrix $\phi(\mc{A})$.
 \end{lemma}
The proof of Theorem 3.4,
\cite{liang2019}  contains the fact that $\mc{A}^{\dg} =
\mc{V}\n\mc{B}^{\dg}\m\mc{U}^{H}$.

\begin{lemma}(Lemma 2.2, \cite{panigrahy2018})\label{ir}
Let  $\mc{P}\in {\C}^{I_{1}\times\cdots\times I_{N}\times I_{1}\times \cdots \times I_{N}}$ be Hermitian. Then 
\begin{itemize}
   \item[(i)] If $\mc{P}\n\mc{Q}=\mc{Q}$ for $\mc{Q}\in {\C}^{I_{1}\times\cdots\times I_{N}\times J_{1}\times \cdots \times J_{M}}$, then $\mc{Q}^{\dg}\n\mc{P}=\mc{Q}^{\dg}$.\label{ir1} 
   \item[(ii)] If $\mc{Q}\n\mc{P}=\mc{Q}$ for $\mc{Q}\in {\C}^{J_{1}\times \cdots \times J_{M}\times I_{1}\times\cdots\times I_{N}}$, then $\mc{P}\n\mc{Q}^{\dg}=\mc{Q}^{\dg}$.\label{ir2}
\end{itemize}
\end{lemma}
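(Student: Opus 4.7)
The two parts are structurally mirror images of one another, so the plan is to describe the argument for (i) in detail and then indicate how (ii) is obtained by swapping the roles of the two Einstein products. The key idea is to convert the one-sided hypothesis $\mc{P}\n\mc{Q}=\mc{Q}$ into a companion identity on the opposite side of $\mc{Q}$ by Hermitian conjugation, and then to thread that identity through a canonical expansion of $\mc{Q}^\dg$.

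First I would take the conjugate transpose of $\mc{P}\n\mc{Q}=\mc{Q}$; using $\mc{P}^H=\mc{P}$, this yields the companion identity $\mc{Q}^H\n\mc{P}=\mc{Q}^H$. Second, I would expand $\mc{Q}^\dg\n\mc{P}$ by inserting the MP relation \eqref{mpeq2} in the form $\mc{Q}^\dg=\mc{Q}^\dg\n\mc{Q}\m\mc{Q}^\dg$, and then rewriting the Hermitian projection $\mc{Q}\m\mc{Q}^\dg$ as its own conjugate transpose $(\mc{Q}^\dg)^H\m\mc{Q}^H$ via \eqref{mpeq3}. After reassociating, the rightmost factor becomes $\mc{Q}^H\n\mc{P}$, which collapses to $\mc{Q}^H$ by the companion identity. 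Undoing the two replacements (reapplying \eqref{mpeq3} and then \eqref{mpeq2}) returns $\mc{Q}^\dg$, proving (i). Part (ii) follows by the same template: $\mc{Q}\n\mc{P}=\mc{Q}$ conjugates to $\mc{P}\n\mc{Q}^H=\mc{Q}^H$, and expanding $\mc{P}\n\mc{Q}^\dg$ via the MP relations for $\mc{Q}\in{\C}^{J_{1}\times\cdots\times J_{M}\times I_{1}\times\cdots\times I_{N}}$ (with the roles of $\m$ and $\n$ swapped, so \eqref{mpeq2} reads $\mc{Q}^\dg\m\mc{Q}\n\mc{Q}^\dg=\mc{Q}^\dg$ and \eqref{mpeq4} reads $(\mc{Q}^\dg\m\mc{Q})^H=\mc{Q}^\dg\m\mc{Q}$) and absorbing the factor $\mc{P}\n\mc{Q}^H$ yields $\mc{P}\n\mc{Q}^\dg=\mc{Q}^\dg$.

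The main obstacle is not analytical but notational: one must track which of $\m$ or $\n$ contracts each index group as transposes are moved across Einstein products, and the subscripts swap between (i) and (ii) because the $I$- and $J$-blocks of $\mc{Q}$ appear in opposite orders. Once this bookkeeping is handled, no tools beyond associativity of the Einstein product, the Hermitian property of $\mc{P}$, and the four identities of Definition~\ref{defmpi} are required; in particular, the cancellation lemmas~\ref{rct1} and~\ref{lct1} are not needed for this statement.
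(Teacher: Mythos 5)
Your argument is correct. Note, however, that the paper itself offers no proof of this lemma to compare against: it is imported verbatim as Lemma 2.2 of \cite{km} (a submitted manuscript), so the statement appears in the Prerequisites section as a quoted result. Judged on its own, your derivation is the standard one and is complete: taking conjugate transposes of $\mc{P}\n\mc{Q}=\mc{Q}$ gives $\mc{Q}^{H}\n\mc{P}=\mc{Q}^{H}$, and then
$\mc{Q}^{\dg}\n\mc{P}=\mc{Q}^{\dg}\n\mc{Q}\m\mc{Q}^{\dg}\n\mc{P}
=\mc{Q}^{\dg}\n(\mc{Q}^{\dg})^{H}\m\bigl(\mc{Q}^{H}\n\mc{P}\bigr)
=\mc{Q}^{\dg}\n(\mc{Q}\m\mc{Q}^{\dg})^{H}=\mc{Q}^{\dg}\n\mc{Q}\m\mc{Q}^{\dg}=\mc{Q}^{\dg}$,
using only \eqref{mpeq2}, \eqref{mpeq3}, the Hermitian property of $\mc{P}$, and associativity; part (ii) is indeed the mirror image via \eqref{mpeq2} and \eqref{mpeq4}. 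The only ingredient you use tacitly and should record explicitly is the reversal rule $(\mc{A}\m\mc{B})^{H}=\mc{B}^{H}\m\mc{A}^{H}$ for the Einstein product (a routine componentwise check, also used silently elsewhere in the paper); you are right that the cancellation Lemmas \ref{rct1} and \ref{lct1} are not needed.
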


Next, we restate a few results for tensors (arbitrary order) which are proved by Panigrahy {\it et al.} \cite{panigrahy2020} very recently for even-order tensors. 
The very first result stated below extends Theorem 3.25, \cite{panigrahy2020} to any two tensors, and provides necessary and sufficient conditions for the reverse-order law which is mentioned in Problem 1 in the introduction section.
%%%%%%%%%%%%%%%%%%%%%%%%%%%%%%%%%%%%%
\begin{theorem}\label{rev1}
Let $\mc{A}\in \mathbb{C}^{I_{1}\times\cdots\times I_{M} \times J_{1}\times\cdots\times J_{N} }$ and $\mc{B}\in \mathbb{C}^{J_{1}\times\cdots\times J_{N} \times K_{1}\times\cdots\times K_{L}}$. Then $(\mc{A}\n\mc{B})^{\dg} = \mc{B}^{\dg} \n \mc{A}^{\dg}$ if and only if 
\begin{equation}\label{reveq1}
\mc{A}^{\dg}\m\mc{A}\n\mc{B}\lp\mc{B}^{H} \n\mc{A}^{H} = \mc{B}\lp\mc{B}^{H} \n \mc{A}^{H}
\end{equation}
and
\begin{equation}\label{reveq2}
\mc{B}\lp\mc{B}^{\dg}\n\mc{A}^{H}\m\mc{A}\n\mc{B} = \mc{A}^{H}\m\mc{A} \n \mc{B}.
\end{equation}
 \end{theorem}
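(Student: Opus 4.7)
The plan is to prove this as a tensor analog of Greville's classical matrix theorem on the reverse-order law. Setting $\mc{X} := \mc{A}\n\mc{B}$ and $\mc{Y} := \mc{B}^\dg \n \mc{A}^\dg$, for each direction I verify the four Moore--Penrose equations (\ref{mpeq1})--(\ref{mpeq4}) of Definition \ref{defmpi}, relying on the Hermiticity identities $(\mc{A}^\dg)^H\n\mc{A}^H=\mc{A}\n\mc{A}^\dg$ and $\mc{A}^H\m(\mc{A}^\dg)^H=\mc{A}^\dg\m\mc{A}$ (and their counterparts for $\mc{B}$), together with the MP-derived identity $\mc{A}^H=\mc{A}^\dg\m\mc{A}\n\mc{A}^H$ obtained by conjugate-transposing $\mc{A}\n\mc{A}^\dg\m\mc{A}=\mc{A}$.

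For the sufficient direction, assume (\ref{reveq1}) and (\ref{reveq2}). The key preliminary identity is obtained by pre-multiplying (\ref{reveq2}) by $(\mc{A}^\dg)^H\n$: the right-hand side collapses via $(\mc{A}^\dg)^H\n\mc{A}^H = \mc{A}\n\mc{A}^\dg$ and $\mc{A}\n\mc{A}^\dg\m\mc{A}=\mc{A}$ to yield $\mc{X}$, while the left-hand side is $(\mc{X}\m\mc{Y})^H\m\mc{X}$, giving the identity $(\mc{X}\m\mc{Y})^H\m\mc{X}=\mc{X}$. A symmetric treatment of (\ref{reveq1}) yields $\mc{X}\m(\mc{Y}\m\mc{X})^H=\mc{X}$. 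From these, multiplying by $\mc{Y}$ on the appropriate side and taking conjugate transposes delivers both the Hermiticity of $\mc{X}\m\mc{Y}$ and of $\mc{Y}\m\mc{X}$ (MP equations (\ref{mpeq3}) and (\ref{mpeq4})), after which $\mc{X}\m\mc{Y}\m\mc{X}=\mc{X}$ (\ref{mpeq1}) drops out immediately. For the remaining equation $\mc{Y}\m\mc{X}\m\mc{Y}=\mc{Y}$ (\ref{mpeq2}), I derive from (\ref{reveq2}) the commutativity $\mc{A}^\dg\m\mc{A}\n\mc{B}\m\mc{B}^\dg=\mc{B}\m\mc{B}^\dg\n\mc{A}^\dg\m\mc{A}$ of the Hermitian projectors (using the identity $\mc{A}^\dg=(\mc{A}^H\m\mc{A})^\dg\m\mc{A}^H$) and then telescope $\mc{B}^\dg\n\mc{A}^\dg\m\mc{A}\n\mc{B}\m\mc{B}^\dg\n\mc{A}^\dg$ via MP-equations for $\mc{A}$ and $\mc{B}$ individually down to $\mc{B}^\dg\n\mc{A}^\dg=\mc{Y}$.

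For the necessary direction, assume $(\mc{A}\n\mc{B})^\dg=\mc{B}^\dg\n\mc{A}^\dg$, so that $\mc{X}$ and $\mc{Y}$ satisfy (\ref{mpeq1})--(\ref{mpeq4}). To extract (\ref{reveq1}), start from MP-equation (\ref{mpeq1}) $\mc{X}\m\mc{Y}\m\mc{X}=\mc{X}$, right-multiply by $\mc{X}^H=\mc{B}^H\n\mc{A}^H$ and left-multiply by $\mc{A}^\dg\m$; use the Hermiticity of $\mc{Y}\m\mc{X}$ (MP equation (\ref{mpeq4})) to rewrite the middle factor, then apply $\mc{A}^H=\mc{A}^\dg\m\mc{A}\n\mc{A}^H$ and the MP-identities for $\mc{A}$ to telescope the left side into $\mc{A}^\dg\m\mc{A}\n\mc{B}\m\mc{B}^H\n\mc{A}^H$ and the right side into $\mc{B}\m\mc{B}^H\n\mc{A}^H$, giving (\ref{reveq1}). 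The derivation of (\ref{reveq2}) is entirely symmetric, left-multiplying (\ref{mpeq1}) by $\mc{X}^H$ and right-multiplying by $\mc{B}^\dg$, using the Hermiticity of $\mc{X}\m\mc{Y}$.

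The chief technical obstacle is the careful bookkeeping of the two interleaved Einstein products $\n$ and $\m$, corresponding to contractions over the distinct index blocks $J_1\cdots J_N$ and $I_1\cdots I_M$: at every manipulation the product symbol must match the block being contracted. The cancellation Lemmas \ref{rct1} and \ref{lct1} apply only when a factor of exactly the shape $\mc{A}\n\mc{A}^H$ (respectively $\mc{A}^H\m\mc{A}$) is present, so intermediate expressions typically need to be reshaped using the Hermiticity identities $\mc{A}\n\mc{A}^\dg=(\mc{A}^\dg)^H\n\mc{A}^H$ and $\mc{A}^\dg\m\mc{A}=\mc{A}^H\m(\mc{A}^\dg)^H$ before the cancellation lemmas can be invoked.
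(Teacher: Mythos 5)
There is no proof to compare against: the paper states Theorem \ref{rev1} without proof, citing Theorem 3.25 of \cite{kbm} (proved there for even-order tensors via range-space arguments), so your attempt has to stand on its own — and as sketched it has two problems, one in each direction. In the sufficiency direction your derivation of equations \eqref{mpeq1}, \eqref{mpeq3}, \eqref{mpeq4} is sound: pre-multiplying \eqref{reveq2} by $(\mc{A}^{\dg})^{H}$ does give $(\mc{X}\m\mc{Y})^{H}\m\mc{X}=\mc{X}$, the symmetric manipulation of \eqref{reveq1} gives $\mc{X}\m(\mc{Y}\m\mc{X})^{H}=\mc{X}$, and multiplying by $\mc{Y}$ yields the two Hermiticity conditions and then \eqref{mpeq1}. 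The gap is at \eqref{mpeq2}: you invoke the commutativity $\mc{A}^{\dg}\m\mc{A}\n\mc{B}\lp\mc{B}^{\dg}=\mc{B}\lp\mc{B}^{\dg}\n\mc{A}^{\dg}\m\mc{A}$ as a consequence of \eqref{reveq2}, justified only by $\mc{A}^{\dg}=(\mc{A}^{H}\m\mc{A})^{\dg}\n\mc{A}^{H}$. What \eqref{reveq2} gives you purely algebraically is Lemma \ref{lm3.14}: $\mc{A}^{H}\m\mc{A}$ commutes with $\mc{B}\m\mc{B}^{\dg}$. Upgrading this to the statement that $(\mc{A}^{H}\m\mc{A})^{\dg}$ (hence $\mc{A}^{\dg}\m\mc{A}$) commutes with $\mc{B}\m\mc{B}^{\dg}$ is true but requires an extra lemma (that the Moore--Penrose inverse of a Hermitian tensor commutes with whatever the tensor commutes with, proved e.g.\ via a limit or SVD representation, or an invariant-subspace argument), which you neither state nor prove; this is exactly the delicate point, since \{1,3,4\}-inverses are not automatically \{2\}-inverses.

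The necessity direction as written does not work. After right-multiplying $\mc{X}\m\mc{Y}\m\mc{X}=\mc{X}$ by $\mc{X}^{H}$ and left-multiplying by $\mc{A}^{\dg}$, the right-hand side is $\mc{A}^{\dg}\m\mc{A}\n\mc{B}\m\mc{B}^{H}\n\mc{A}^{H}$ — already the left side of \eqref{reveq1} — and no ``MP-identity for $\mc{A}$'' can reduce it to $\mc{B}\m\mc{B}^{H}\n\mc{A}^{H}$, because $\mc{A}^{\dg}\m\mc{A}$ sits against $\mc{B}$, not against $\mc{A}^{H}$; removing it is precisely the assertion \eqref{reveq1}, so the step is circular. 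Likewise the left-hand side, after using the Hermiticity of $\mc{Y}\m\mc{X}$, becomes $\mc{A}^{\dg}\m\mc{A}\n\mc{B}\m\mc{B}^{H}\n\mc{A}^{\dg}\m\mc{A}\n\mc{B}\m\mc{B}^{\dg}\n\mc{A}^{H}$, which does not telescope to the claimed expression. A correct necessity argument needs genuinely more: either first prove directly from $(\mc{A}\n\mc{B})^{\dg}=\mc{B}^{\dg}\n\mc{A}^{\dg}$ that $\mc{A}^{\dg}\m\mc{A}$ commutes with $\mc{B}\m\mc{B}^{H}$ and $\mc{A}^{H}\m\mc{A}$ with $\mc{B}\m\mc{B}^{\dg}$ (then \eqref{reveq1} and \eqref{reveq2} each follow in one line from $\mc{A}^{\dg}\m\mc{A}\n\mc{A}^{H}=\mc{A}^{H}$ and $\mc{B}\m\mc{B}^{\dg}\n\mc{B}=\mc{B}$), noting you cannot borrow Theorem \ref{cpt} for this since the paper derives it from Theorem \ref{rev1}; or follow the range-inclusion route of the matrix proof of Greville's theorem as adapted in \cite{kbm}.
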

 %%%%%%%%%%%%%%%%%%%%%%%%%%%%%%%%%%%%% 
The next two lemmas are again the modified version of Lemma \ref{lm3.13} and Lemma \ref{lm3.14},  \cite{panigrahy2020}, and are frequently used to derive some new reverse-order laws. The first one provides a  necessary and sufficient condition for the commutative property of $\mc{A}^{\dg}\m\mc{A}$ and $\mc{B}\lp\mc{B}^{H}$.

%%%%%%%%%%%%%%%%%%%%%%%%%%%%%%%%%%%%% 
\begin{lemma}\label{lm3.13}
Let $\mc{A} \in \mathbb{C}^{I_{1}\times\cdots\times I_{M} \times J_{1}\times\cdots\times J_{N} }$ and $\mc{B}\in \mathbb{C}^{J_{1}\times\cdots\times J_{N} \times K_{1}\times\cdots\times K_{L} }$. Then, $$\mc{A}^{\dg} \m \mc{A} \n \mc{B} \lp\mc{B}^{H} \n\mc{A}^{H}=\mc{B}\lp\mc{B}^{H} \n\mc{A}^{H}$$ if and only if $\mc{A}^{\dg}\m\mc{A}$ commutes with $\mc{B}\lp\mc{B}^{H}$.
\end{lemma}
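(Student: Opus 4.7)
The plan is to transform the hypothesis into a statement about the two Hermitian tensors $\mc{P} := \mc{A}^{\dg}\m\mc{A}$ and $\mc{Q} := \mc{B}\m\mc{B}^{H}$, both of which lie in $\C^{J_1\times\cdots\times J_N\times J_1\times\cdots\times J_N}$; here $\mc{P}$ is Hermitian by \eqref{mpeq4} and idempotent by \eqref{mpeq1}, while $\mc{Q}$ is visibly Hermitian. In this notation the conclusion we are after is simply $\mc{P}\n\mc{Q}=\mc{Q}\n\mc{P}$.

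The crux of the argument is to post-multiply the given hypothesis on the right by $(\mc{A}^{H})^{\dg}$ via the $\m$ product. By associativity, the trailing block becomes $\mc{A}^{H}\m(\mc{A}^{H})^{\dg}$, which I claim equals $\mc{P}$. To see this, I would combine the standard identity $(\mc{A}^{\dg})^{H}=(\mc{A}^{H})^{\dg}$ (obtained by applying $H$ to the four equations of Definition \ref{defmpi}) with $(\mc{A}^{\dg}\m\mc{A})^{H}=\mc{A}^{H}\m(\mc{A}^{\dg})^{H}$; since $\mc{A}^{\dg}\m\mc{A}$ is Hermitian, these force $\mc{A}^{H}\m(\mc{A}^{H})^{\dg}=\mc{A}^{\dg}\m\mc{A}=\mc{P}$. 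The hypothesis thereby collapses to
\begin{equation*}
\mc{P}\n\mc{Q}\n\mc{P}=\mc{Q}\n\mc{P}.
\end{equation*}

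For the final step, I would take the conjugate transpose of the identity just obtained. The left-hand side is self-adjoint because $\mc{P}$ and $\mc{Q}$ both are, so it remains $\mc{P}\n\mc{Q}\n\mc{P}$, while the right-hand side becomes $(\mc{Q}\n\mc{P})^{H}=\mc{P}\n\mc{Q}$. Combining this with the previous display yields $\mc{P}\n\mc{Q}=\mc{P}\n\mc{Q}\n\mc{P}=\mc{Q}\n\mc{P}$, which is precisely the claimed commutativity. The only delicate point, I expect, is bookkeeping the dimensions so that every Einstein product along the way is well-defined (recall $\mc{A}^{\dg},\mc{A}^{H}\in\C^{J_1\times\cdots\times J_N\times I_1\times\cdots\times I_M}$, so the post-multiplication genuinely uses $\m$ and not $\n$); once that is checked, the argument is essentially formal.
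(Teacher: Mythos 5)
Your argument is correct: post-multiplying the hypothesis by $(\mc{A}^{H})^{\dg}=(\mc{A}^{\dg})^{H}$ to identify $\mc{A}^{H}\m(\mc{A}^{H})^{\dg}=(\mc{A}^{\dg}\m\mc{A})^{H}=\mc{A}^{\dg}\m\mc{A}$, thereby collapsing the assumption to $\mc{P}\n\mc{Q}\n\mc{P}=\mc{Q}\n\mc{P}$ with $\mc{P}=\mc{A}^{\dg}\m\mc{A}$, $\mc{Q}=\mc{B}\m\mc{B}^{H}$, and then taking conjugate transposes to get $\mc{P}\n\mc{Q}=\mc{P}\n\mc{Q}\n\mc{P}=\mc{Q}\n\mc{P}$, is exactly the standard argument, and all the Einstein products are used with the right contraction sizes. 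Note that the paper itself states Lemma \ref{lm3.13} without proof, importing the even-order case from \cite{kbm}; your write-up is essentially that same argument (the only cosmetic simplification available is to post-multiply directly by $(\mc{A}^{\dg})^{H}$, which avoids invoking $(\mc{A}^{H})^{\dg}=(\mc{A}^{\dg})^{H}$ and hence the uniqueness of the Moore--Penrose inverse).
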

%%%%%%%%%%%%%%%%%%%%%%%%%%%%%%%%%%%%%
Similarly, the next one  presents a  sufficient condition for the commutative property of $\mc{A}^{H}\m\mc{A}$ and $\mc{B}\lp\mc{B}^{\dg}$.

%%%%%%%%%%%%%%%%%%%%%%%%%%%%%%%%%%%%% 
\begin{lemma}\label{lm3.14}
Let $\mc{A} \in \mathbb{C}^{I_{1}\times\cdots\times I_{M} \times J_{1}\times\cdots\times J_{N} }$ and $\mc{B}\in \mathbb{C}^{J_{1}\times\cdots\times J_{N} \times K_{1}\times\cdots\times K_{L} }$. Then,  $$\mc{B}\lp\mc{B}^{\dg}\n\mc{A}^{H}\m\mc{A}\n\mc{B}=\mc{A}^{H}\m\mc{A} \n \mc{B}$$ if and only if $\mc{A}^{H}\m\mc{A}$ commutes with $\mc{B}\lp\mc{B}^{\dg}$.
\end{lemma}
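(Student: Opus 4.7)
The strategy is to strip the trailing factor $\mc{B}$ from the hypothesis, producing a purely symmetric identity in the two Hermitian tensors $\mc{P}:=\mc{A}^{H}\m\mc{A}$ and $\mc{Q}:=\mc{B}\m\mc{B}^{\dg}$. Both live in $\mathbb{C}^{J_{1}\times\cdots\times J_{N}\times J_{1}\times\cdots\times J_{N}}$; $\mc{P}$ is manifestly Hermitian, while $\mc{Q}$ is Hermitian by the Moore--Penrose equation \eqref{mpeq3}. My plan is to exploit this Hermitian symmetry to obtain the desired commutation identity $\mc{P}\n\mc{Q}=\mc{Q}\n\mc{P}$.

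First, I would right-multiply both sides of the hypothesis by $\m\mc{B}^{\dg}$. The shapes match (the last $M$ indices of the trailing $\mc{B}$ contract against the first $M$ indices of $\mc{B}^{\dg}$), and by associativity the tail $\mc{B}\m\mc{B}^{\dg}$ collapses to $\mc{Q}$ on each side, producing the identity $\mc{Q}\n\mc{P}\n\mc{Q}=\mc{P}\n\mc{Q}$; call this $(*)$.

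Second, I would take the conjugate transpose of $(*)$, using $(\mc{X}\n\mc{Y})^{H}=\mc{Y}^{H}\n\mc{X}^{H}$ together with $\mc{P}^{H}=\mc{P}$ and $\mc{Q}^{H}=\mc{Q}$. The left-hand side of $(*)$ is self-adjoint and is unchanged, while its right-hand side becomes $\mc{Q}\n\mc{P}$. Chaining the two forms gives $\mc{P}\n\mc{Q}=\mc{Q}\n\mc{P}\n\mc{Q}=\mc{Q}\n\mc{P}$, which is the required commutativity.

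I do not foresee any serious obstacle; the whole proof reduces to two short manipulations once the correct factor to absorb is identified. The only place that demands care is bookkeeping of index groups when verifying that right-multiplication by $\m\mc{B}^{\dg}$ is well-defined and associates with the rest of the product. The argument is the natural structural mirror of the proof of Lemma \ref{lm3.13}: there one eliminates the outer $\mc{A}^{H}$ by first taking conjugate transposes and then left-multiplying by $\mc{A}^{\dg}\m$ so that $\mc{A}^{\dg}\m\mc{A}$ is absorbed; here, the trailing $\mc{B}$ can be absorbed directly on the right via $\m\mc{B}^{\dg}$.
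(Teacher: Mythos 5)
Your proof is correct, and it is essentially the same argument the paper relies on (it states the lemma as a modified version of the corresponding result in \cite{kbm} without reproving it): post-multiply the hypothesis by $\m\mc{B}^{\dg}$ to get $\mc{Q}\n\mc{P}\n\mc{Q}=\mc{P}\n\mc{Q}$ with $\mc{P}=\mc{A}^{H}\m\mc{A}$, $\mc{Q}=\mc{B}\m\mc{B}^{\dg}$, then use that the left-hand side is Hermitian to conclude $\mc{P}\n\mc{Q}=(\mc{P}\n\mc{Q})^{H}=\mc{Q}\n\mc{P}$. No gaps.
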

%%%%%%%%%%%%%%%%%%%%%%%%%%%%%%%%%%%%% 
  Theorem \ref{rev1} together with Lemma \ref{lm3.13} and Lemma \ref{lm3.14} yields the following outcome.
%%%%%%%%%%%%%%%%%%%%%%%%%%%%%%%%%%%%% 
\begin{theorem}\label{rolcpt}
Let $\mc{A}\in \mathbb{C}^{I_{1}\times\cdots\times I_{M} \times J_{1} \times \cdots \times J_{N} }$ and $\mc{B}\in \mathbb{C}^{J_{1} \times \cdots \times J_{N} \times K_{1}\times\cdots\times K_{L} }$.  Then  $(\mc{A}\n\mc{B})^{\dg} =\mc{B}^{\dg} \n \mc{A}^{\dg}$ if and only if  $\mc{A}^{\dg} \m\mc{A}$ commutes with $\mc{B}\lp\mc{B}^{H}$ and $\mc{A}^{H} \m\mc{A}$ commutes with $\mc{B}\lp\mc{B}^{\dg}$.

\end{theorem}
%%%%%%%%%%%%%%%%%%%%%%%%%%%%%%%%%%%%% 
 Theorem 3.30, \cite{panigrahy2020} is reproduced here for any two tensors. 
%%%%%%%%%%%%%%%%%%%%%%%%%%%%%%%%%%%%% 
\begin{theorem}\label{cp}
Let  $\mc{A}\in \mathbb{C}^{I_{1}\times\cdots\times I_{M} \times J_{1} \times \cdots\times J_{N} }$ and $\mc{B}\in \mathbb{C}^{J_{1} \times \cdots \times J_{N} \times K_{1}\times\cdots\times K_{L} }$. If $(\mc{A} \n \mc{B})^{\dg} = \mc{B}^{\dg} \n \mc{A}^{\dg} $, then  $\mc{A}^{\dg} \m \mc{A} $ 
and $\mc{B} \lp \mc{B}^{\dg} $ commute.
\end{theorem}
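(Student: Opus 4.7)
The plan is to combine Theorem \ref{rev1} and a single Penrose identity to promote the commutativity of $R:=\mc{A}^H\m\mc{A}$ with $Q:=\mc{B}\m\mc{B}^{\dg}$ (already granted by Theorem \ref{cpt}(ii) under our hypothesis) into commutativity of $P:=\mc{A}^{\dg}\m\mc{A}$ with $Q$. All three tensors live in ${\C}^{J_{1}\times\cdots\times J_{N}\times J_{1}\times\cdots\times J_{N}}$, and $P$ and $Q$ are Hermitian idempotents, so $P\n Q = Q\n P$ is what must be shown.

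The bridge to $P$ is the identity $R\n P = R$, obtained by pre-multiplying the Penrose equation $\mc{A}\n\mc{A}^{\dg}\m\mc{A}=\mc{A}$ by $\mc{A}^{H}$ (via $\m$) and re-associating. Using $R\n P = R$ on both sides of $R\n Q = Q\n R$ gives
\[
R\n P\n Q \;=\; R\n Q \;=\; Q\n R \;=\; Q\n R\n P \;=\; R\n Q\n P,
\]
so $R\n(P\n Q-Q\n P)=0$, i.e., $\mc{A}^{H}\m\mc{A}\n(P\n Q-Q\n P)=0$. Invoking the version of Lemma \ref{lct1} appropriate to our dimension profile cancels one factor of $\mc{A}^{H}$ and leaves $\mc{A}\n(P\n Q-Q\n P)=0$. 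Since $\mc{A}\n P = \mc{A}\n\mc{A}^{\dg}\m\mc{A} = \mc{A}$ (Penrose~1), this collapses to $\mc{A}\n Q = \mc{A}\n Q\n P$.

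Pre-multiplying the last identity by $\mc{A}^{\dg}$ (via $\m$) converts it into $P\n Q = P\n Q\n P$. Taking the Hermitian conjugate of both sides, and using that $P$ and $Q$ are Hermitian, yields $Q\n P = P\n Q\n P$ as well, so $P\n Q = Q\n P$, which is the desired commutation. The main obstacle I foresee is dimensional bookkeeping: Lemma \ref{lct1} is stated for the index layout of Section~\ref{sec2} (first $N$ I-dims then $M$ J-dims), whereas our $\mc{A}$ carries the mirrored layout (first $M$ I-dims then $N$ J-dims), so one must invoke the obvious mirror form; the inner-product-positivity cancellation in its proof is unaffected by this relabelling.
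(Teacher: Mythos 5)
Your argument is correct, and each step checks out: $R\n P=R$ is Penrose equation (1) pre-multiplied by $\mc{A}^{H}$; combined with $R\n Q=Q\n R$ (Theorem \ref{cpt}(ii), which precedes Theorem \ref{cp} and is legitimately available) it gives $\mc{A}^{H}\m\mc{A}\n(P\n Q-Q\n P)=\mc{O}$; the mirrored Lemma \ref{lct1} cancels the factor $\mc{A}^{H}$, and your worry about the index layout is harmless, since the paper itself applies Lemmas \ref{rct1} and \ref{lct1} with the two index groups interchanged (for instance in deriving the expression $\mc{A}^{\dg}=\mc{V}\m\mc{A}\n\mc{W}$ in Subsection 3.1); finally $\mc{A}\n P=\mc{A}$, pre-multiplication by $\mc{A}^{\dg}$, and the Hermitian property of $P$ and $Q$ give $P\n Q=P\n Q\n P=(P\n Q\n P)^{H}=Q\n P$. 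Note that only Hermitian-ness of $P$ and $Q$ is used; their idempotency never enters.

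As for comparison with the paper: there is nothing to compare against, because the paper states Theorem \ref{cp} without proof, merely reproducing Theorem 3.30 of \cite{kbm} (proved there for even-order tensors) in the arbitrary-order setting. Your derivation is therefore a genuine addition rather than a variant: it builds the result entirely from ingredients already present in this paper --- Theorem \ref{cpt}(ii), the cancellation Lemma \ref{lct1}, and the Penrose equations --- and it makes explicit that upgrading commutation with $\mc{A}^{H}\m\mc{A}$ to commutation with the projector $\mc{A}^{\dg}\m\mc{A}$ requires nothing beyond the cancellation lemma, with no appeal to the external even-order argument or to any SVD/spectral machinery.
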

In general, the converse of the above result doesn't hold (see Example 3.31, \cite{panigrahy2020}).

\section{Main Results}
 This section is two-fold. 
First, we discuss some necessary and sufficient conditions for the reverse-order law of arbitrary order tensors. Some new properties of a $\{1,2,3\}$-inverse and a $\{1,2,4\}$-inverse of a tensor along with the reverse-order law are also presented. Second, we obtain a result in the direction of the additive Moore--Penrose inverse of tensors as an application of the reverse-order law, discuss some results on finding the additive perturbation bounds of the Moore–Penrose inverse and introduce sub-proper splitting of a tensor.
 \subsection{Reverse-order law of arbitrary order tensors}\label{secrolat}
%%%%%%%%%%%%%%%%%%%%%%%%%%%%%%%%%%%%%
In this subsection, we provide several necessary and sufficient conditions for the reverse-order law of arbitrary order tensors. Here onward, a tensor means it is of arbitrary order unless stated otherwise. The first main result of this subsection presents a necessary condition for the reverse-order law.
%%%%%%%%%%%%%%%%%%%%%%%%%%%%%%%%%%%%%
\begin{lemma}\label{mrolpT1}
Let $\mc{A}\in{\C}^{I_{1}\times \cdots\times I_{M}\times J_{1}\times\cdots\times J_{N}}$ and $\mc{B}\in{\C}^{J_{1}\times \cdots\times J_{N}\times K_{1}\times\cdots\times K_{L}}$. If $(\mc{A}\n\mc{B})^{\dg}=\mc{B}^{\dg}\n\mc{A}^{\dg}$, then 
$$(\mc{A}^{\dg}\m\mc{A}\n\mc{B}\lp\mc{B}^{\dg})^{\dg}=\mc{B}\lp\mc{B}^{\dg}\n\mc{A}^{\dg}\m\mc{A}.$$
\end{lemma}
%#########################%
\begin{proof}
Let $\mc{X}=\mc{A}^{\dg}\m\mc{A}$ and $\mc{Y}=\mc{B}\lp\mc{B}^{\dg}$. Then it is clear that $\mc{X}$ and $\mc{Y}$ are both Hermitian and idempotent, so $\mc{X}^{\dg}=\mc{X}=\mc{X}^{H}$ and $\mc{Y}^{\dg}=\mc{Y}= \mc{Y}^{H}$. Thus,  $\mc{X}^{\dg}\n\mc{X}$ commutes with $\mc{Y}\n\mc{Y}^{H}$ by Theorem  \ref{cp}.
So, the claim follows by Theorem \ref{rolcpt}.
\end{proof}
%#########################%
%%%%%%%%%%%%%%%%%%%%%%%%%%%%%%%%%%%%%
We provide below an example which shows that the converse of the above result is not true. 
\begin{example}
Let $\mc{A}=(a_{ijk})\in{\C}^{2\times3\times4}$ and $\mc{B}=(b_{ij})\in{\C}^{4\times 2}$ be such that
\begin{center}
 \begin{tabular}{c c c | c c c | c c c | c c c }
\hline
  \multicolumn{3}{c}{$\mc{A}(:,:,1)$}   & \multicolumn{3}{c}{$\mc{A}(:,:,2)$} & \multicolumn{3}{c}{$\mc{A}(:,:,3)$} & \multicolumn{3}{c}{$\mc{A}(:,:,4)$} \\
\hline
$1$ & $0$ & $1$ & $0$ & $1$ & $0$ & $1$ & $0$ & $0$ & $0$ & $0$ & $0$\\
$0$ & $1$ & $0$ & $0$ & $0$ & $1$ & $0$ & $0$ & $1$ & $0$ & $1$ & $0$\\
 \hline
\end{tabular},
and $\mc{B}(:,:)=\begin{pmatrix}1&0\\0&1\\1&0\\0&1\end{pmatrix}$.
\end{center}
Then  $\mc{A}\1\mc{B}=(c_{ijk})\in{\C}^{2\times3\times2}$, where
 \begin{center}
 \begin{tabular}{c c c | c c c }
\hline
  \multicolumn{3}{c}{$\mc{A}\1\mc{B}(:,:,1)$}   & \multicolumn{3}{c}{$\mc{A}\1\mc{B}(:,:,2)$} \\
\hline
$2$ & $0$ & $1$ & $0$ & $1$ & $0$\\
$0$ & $1$ & $1$ & $0$ & $1$ & $1$\\
 \hline
\end{tabular}.
\end{center}
The Moore--Penrose inverses $\mc{A}^{\dg}=(a'_{ijk})\in{\C}^{4\times2\times3}$, $\mc{B}^{\dg}=(b'_{ij})\in{\C}^{2\times 4}$ and $(\mc{A}\1\mc{B})^{\dg}=(c'_{ij})\in{\C}^{2\times 2\times 3}$, are calculated as follows:
\begin{center}
 \begin{tabular}{c c | c c | c c }
\hline
  \multicolumn{2}{c}{$\mc{A}^{\dg}(:,:,1)$}   & \multicolumn{2}{c}{$\mc{A}^{\dg}(:,:,2)$} & \multicolumn{2}{c}{$\mc{A}^{\dg}(:,:,3)$} \\
\hline
$1/4$ & $0$ & $1/4$ & $0$ & $3/4$ & $-1/4$\\
$-1/4$ & $0$ & $3/4$ & $0$ & $1/4$ & $1/4$\\
$1/2$ & $0$ & $-1/2$ & $0$ & $-1/2$ & $1/2$\\
$-1/4$ & $0$ & $-1/4$ & $1$ & $-3/4$ & $1/4$\\
 \hline
\end{tabular},
 $\mc{B}^{\dg}(:,:)=\begin{pmatrix}1/2 & 0 & 1/2 & 0 \\ 0 & 1/2 & 0 & 1/ 2 \end{pmatrix}$,
\end{center}
and
\begin{center}
    \begin{tabular}{c c | c c | c c }
\hline
  \multicolumn{2}{c}{$(\mc{A}\1\mc{B})^{\dg}(:,:,1)$}   & \multicolumn{2}{c}{$(\mc{A}\1\mc{B})^{\dg}(:,:,2)$} & \multicolumn{2}{c}{$(\mc{A}\1\mc{B})^{\dg}(:,:,3)$}\\
\hline
$6/17$ & $0$ & $-2/17$ & $1/17$ & $3/17$ & $1/17$\\
$-4/17$ & $0$ & $7/17$ & $5/17$ & $-2/17$ & $5/17$\\
\hline
\end{tabular}.
\end{center}
So, we get $\mc{B}^{\dg}\1\mc{A}^{\dg}\in{\C}^{2\times 2\times 3}$, where
\begin{center}
    \begin{tabular}{c c | c c | c c }
\hline
  \multicolumn{2}{c}{$\mc{B}^{\dg}\1\mc{A}^{\dg}(:,:,1)$} & \multicolumn{2}{c}{$\mc{B}^{\dg}\1\mc{A}^{\dg}(:,:,2)$} & \multicolumn{2}{c}{$\mc{B}^{\dg}\1\mc{A}^{\dg}(:,:,3)$} \\
\hline
$3/8$ & $0$ & $-1/8$ & $0$ & $1/8$ & $1/8$\\
$-1/4$ & $0$ & $1/4$ & $1/2$ & $-1/4$ & $1/4$\\
\hline
\end{tabular}.
\end{center}
We thus have $$\mc{B}\1\mc{B}^{\dg}\1\mc{A}^{\dg}\2\mc{A}(:,:)=\begin{pmatrix}1\slash2&0&1\slash2&0\\0&1\slash2&0&1\slash2 \\1\slash2&0&1\slash2&0\\0&1\slash2&0&1\slash2 \end{pmatrix},~
\mc{A}^{\dg}\2\mc{A}\1\mc{B}\1\mc{B}^{\dg}(:,:)=\begin{pmatrix}1\slash2&0&1\slash2&0\\0&1\slash2&0&1\slash2 \\1\slash2&0&1\slash2&0\\0&1\slash2&0&1\slash2 \end{pmatrix},$$
and $$(\mc{A}^{\dg}\2\mc{A}\1\mc{B}\1\mc{B}^{\dg})^{\dg}(:,:)=\begin{pmatrix}1\slash2&0&1\slash2&0\\0&1\slash2&0&1\slash2 \\1\slash2&0&1\slash2&0\\0&1\slash2&0&1\slash2 \end{pmatrix}.$$
Whence, $(\mc{A}^{\dg}\2\mc{A}\1\mc{B}\1\mc{B}^{\dg})^{\dg}=\mc{B}\1\mc{B}^{\dg}\1\mc{A}^{\dg}\2\mc{A}$, but $(\mc{A}\1\mc{B})^{\dg}\neq \mc{B}^{\dg}\1\mc{A}^{\dg}$.
\end{example}
The converse of Lemma \ref{mrolpT1} is true under the assumption of the fact that $\mc{A}\n\mc{B}\lp\mc{B}^{\dg}\n\mc{A}^{\dg}$ and $\mc{B}^{\dg}\n\mc{A}^{\dg}\m\mc{A}\n\mc{B}$ are Hermitian. The stated fact is proved below. 
\begin{lemma}\label{Th2}
Let $\mc{A}\in{\C}^{I_{1}\times \cdots\times I_{M}\times J_{1}\times\cdots\times J_{N}}$ and $\mc{B}\in{\C}^{J_{1}\times \cdots\times J_{N}\times K_{1}\times\cdots\times K_{L}}$. Also, let $\mc{P}=\mc{A}^{\dg}\m\mc{A}$ and $\mc{Q}=\mc{B}\lp\mc{B}^{\dg}$. If $\mc{P}\n\mc{Q}=\mc{Q}\n\mc{P}$, $(\mc{A}\n\mc{B}\lp\mc{B}^{\dg}\n\mc{A}^{\dg})^{H}=\mc{A}\n\mc{B}\lp\mc{B}^{\dg}\n\mc{A}^{\dg}$ and $(\mc{B}^{\dg}\n\mc{A}^{\dg}\m\mc{A}\n\mc{B})^{H}= \mc{B}^{\dg}\n\mc{A}^{\dg}\m\mc{A}\n\mc{B}$,
then $(\mc{A}\n\mc{B})^{\dg}=\mc{B}^{\dg}\n\mc{A}^{\dg}$.
\end{lemma}
%#########################%
\begin{proof}
Since $\mc{P}\n\mc{Q}=\mc{Q}\n\mc{P}$, so by Equation \eqref{mpeq1} we get
\begin{equation}\label{rolse1}
 \mc{A}^{\dg}\m\mc{A}\n\mc{B}\lp\mc{B}^{\dg}\n \mc{A}^{\dg}\m\mc{A}\n\mc{B}\lp\mc{B}^{\dg} = \mc{A}^{\dg}\m\mc{A}\n\mc{B}\lp\mc{B}^{\dg},
\end{equation}
and by Equation \eqref{mpeq2} we get
\begin{equation}\label{rolse2}
 \mc{B}\lp\mc{B}^{\dg}\n\mc{A}^{\dg}\m\mc{A}\n\mc{B}\lp\mc{B}^{\dg}\n\mc{A}^{\dg}\m\mc{A} = \mc{B}\lp\mc{B}^{\dg}\n\mc{A}^{\dg}\m\mc{A}.
\end{equation}
Let $\mc{X}=\mc{A}\n\mc{B}$ and $\mc{Y}=\mc{B}^{\dg}\n\mc{A}^{\dg}$. Pre-multiplying $\mc{A}$ and post-multiplying $\mc{B}$ to Equation \eqref{rolse1} yields $\mc{X}\lp\mc{Y}\m\mc{X} = \mc{X}$, and on pre-multiplication of $\mc{B}^{\dg}$ and post-multiplication of $\mc{A}^{\dg}$ with Equation \eqref{rolse2} results $\mc{Y}\m\mc{X}\lp\mc{Y}=\mc{Y}$. The Hermitian property of $\mc{X}\lp\mc{Y}$ and $\mc{Y}\m\mc{X}$ are confirmed by the second and third assumptions. The claim is thus attained by Definition \ref{defmpi}.
\end{proof}
%##########

Theorem \ref{cp} and Lemma \ref{Th2} can now be together stated as following.
\begin{theorem}
Let $\mc{A}\in{\C}^{I_{1}\times \cdots\times I_{M}\times J_{1}\times\cdots\times J_{N}}$ and $\mc{B}\in{\C}^{J_{1}\times \cdots\times J_{N}\times K_{1}\times\cdots\times K_{L}}$. Then  $(\mc{A}\n\mc{B})^{\dg}=\mc{B}^{\dg}\n\mc{A}^{\dg}$ if and only if 
\begin{itemize}
    \item[(i)] $\mc{A}^{\dg}\m\mc{A}\n\mc{B}\lp\mc{B}^{\dg}=\mc{B}\lp\mc{B}^{\dg}\n\mc{A}^{\dg}\m\mc{A}$,
    \item[(ii)] $(\mc{A}\n\mc{B}\lp\mc{B}^{\dg}\n\mc{A}^{\dg})^{H}=\mc{A}\n\mc{B}\lp\mc{B}^{\dg}\n\mc{A}^{\dg}$,
    \item[(iii)] $(\mc{B}^{\dg}\n\mc{A}^{\dg}\m\mc{A}\n\mc{B})^{H}=\mc{B}^{\dg}\n\mc{A}^{\dg}\m\mc{A}\n\mc{B}$.
\end{itemize}
\end{theorem}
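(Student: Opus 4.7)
The plan is to observe that this final theorem is essentially a packaging of Theorem \ref{mrolpT1} and Theorem \ref{Th2}, so the proof will be short and will consist mainly of citing those two results together with the defining equations \eqref{mpeq3} and \eqref{mpeq4} of the Moore--Penrose inverse.

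For the ``only if'' direction, I assume $(\mc{A}\n\mc{B})^{\dg}=\mc{B}^{\dg}\n\mc{A}^{\dg}$ and derive conditions (i), (ii), (iii). Condition (i) is immediate from Theorem \ref{mrolpT1}, which already asserts precisely that equality under the reverse-order-law hypothesis. For conditions (ii) and (iii), I would set $\mc{X}=\mc{A}\n\mc{B}$ and $\mc{Y}=\mc{B}^{\dg}\n\mc{A}^{\dg}=\mc{X}^{\dg}$, and then simply invoke Moore--Penrose equations \eqref{mpeq3} and \eqref{mpeq4} applied to $\mc{X}$: the former gives $(\mc{X}\n\mc{Y})^{H}=\mc{X}\n\mc{Y}$, which is exactly (ii) once written out, and the latter gives $(\mc{Y}\n\mc{X})^{H}=\mc{Y}\n\mc{X}$, which is exactly (iii).

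For the ``if'' direction, I simply observe that conditions (i)--(iii) are the three hypotheses of Theorem \ref{Th2}, so the conclusion $(\mc{A}\n\mc{B})^{\dg}=\mc{B}^{\dg}\n\mc{A}^{\dg}$ follows at once by applying that theorem.

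There is no genuine obstacle here; the proof is essentially two one-line citations glued together with one application of the definition of the Moore--Penrose inverse. The only mild care needed is notational bookkeeping: in conditions (ii) and (iii) the star subscripts $\m$, $\n$, $\lp$ must be matched consistently with the Einstein-product shapes of $\mc{A}$ and $\mc{B}$ (so that $\mc{X}\n\mc{Y}$ and $\mc{Y}\n\mc{X}$ are indeed the tensors appearing in those conditions), which is routine because the dimensions declared in the statement agree with those in Theorems \ref{mrolpT1} and \ref{Th2}.
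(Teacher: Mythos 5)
Your proposal is correct and matches the paper's intent exactly: the paper gives no separate argument, simply stating that this theorem is Theorems \ref{mrolpT1} and \ref{Th2} ``together stated,'' with conditions (ii)--(iii) in the forward direction following immediately from Equations \eqref{mpeq3}--\eqref{mpeq4} once $\mc{B}^{\dg}\n\mc{A}^{\dg}$ is known to be the Moore--Penrose inverse of $\mc{A}\n\mc{B}$. Your remark about matching the star subscripts is apt, since the paper itself is slightly inconsistent there (e.g.\ $\mc{B}\m\mc{B}^{\dg}$ versus $\mc{B}\lp\mc{B}^{\dg}$), but this is purely notational.
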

Now, we move to the proof an interesting result which is helpful to improve the reverse-order law.
\begin{lemma}\label{rol124}
Let $\mc{A}\in{\C}^{I_{1}\times \cdots\times I_{M}\times J_{1}\times\cdots\times J_{N}}$ and $\mc{B}\in{\C}^{J_{1}\times \cdots\times J_{N}\times K_{1}\times\cdots\times K_{L}}$. If $\mc{A}^{\dg}\m\mc{A}\n\mc{B}\lp\mc{B}^{H}\n\mc{A}^{H}=\mc{B}\lp\mc{B}^{H}\n\mc{A}^{H}$, then $\mc{B}^{\dg}\n\mc{A}^{\dg}\in (\mc{A}\n\mc{B})\{1,2,4\}$.
\end{lemma}
\begin{proof}
Pre-multiplying and post-multiplying $\mc{A}^{\dg}\m\mc{A}\n\mc{B}\lp\mc{B}^{H}\n\mc{A}^{H}=\mc{B}\lp\mc{B}^{H}\n\mc{A}^{H}$ by $\mc{B}^{\dg}$ and $((\mc{A}\n\mc{B})^{H})^{\dg}$, respectively, we get
\begin{eqnarray*}
\mc{B}^{\dg}\n\mc{A}^{\dg}\m\mc{A}\n\mc{B}\lp\mc{B}^{H}\n\mc{A}^{H}\m((\mc{A}\n\mc{B})^{H})^{\dg}&=&\mc{B}^{\dg}\n \mc{B}\lp\mc{B}^{H}\n\mc{A}^{H}  \m((\mc{A}\n\mc{B})^{H})^{\dg}\\
&=&(\mc{A}\n\mc{B})^{\dg}\m\mc{A}\n\mc{B}.
\end{eqnarray*}
But 
\begin{eqnarray*}
\mc{B}^{\dg}\n\mc{A}^{\dg}\m\mc{A}\n\mc{B}\lp\mc{B}^{H}\n\mc{A}^{H}\m((\mc{A}\n\mc{B})^{H})^{\dg}&=& \mc{B}^{\dg}\n\mc{A}^{\dg}\m\mc{A}\n\mc{B}\lp(\mc{A}\n\mc{B})^{H}\m((\mc{A}\n\mc{B})^{\dg})^{H}\\
&=& \mc{B}^{\dg}\n\mc{A}^{\dg}\m\mc{A}\n\mc{B}.
\end{eqnarray*}
Thus, we have 
\begin{equation}\label{eq21}
   \mc{B}^{\dg} \n\mc{A}^{\dg} \m\mc{A} \n\mc{B} = (\mc{A}\n\mc{B})^{\dg} \m \mc{A}\n\mc{B}.
\end{equation}
 Let $\mc{X}=\mc{A}\n\mc{B}$ and $\mc{Y}=\mc{B}^{\dg}\n\mc{A}^{\dg}$. Then pre-multiplying $\mc{A}\n\mc{B}$ to  Equation \eqref{eq21}, we get $\mc{X}\lp\mc{Y}\m\mc{X}=\mc{X}$. Using Lemma \ref{lm3.13}, we have 
 \begin{equation*}
     \mc{B}\lp\mc{B}^{H}\n\mc{A}^{\dg}\m\mc{A}\n\mc{B}\lp\mc{B}^{\dg} \n\mc{A}^{H} = \mc{B}\lp\mc{B}^{H} \n \mc{A}^{H}.
 \end{equation*}
Pre-multiplying $\mc{B}^{\dg}$ to the above equation, we obtain
 \begin{equation*}
     \mc{B}^{H}\n\mc{A}^{\dg}\m\mc{A}\n\mc{B}\lp\mc{B}^{\dg} \n\mc{A}^{H} = \mc{B}^{H} \n \mc{A}^{H},
 \end{equation*}
 which reduces to 
 \begin{equation*}
     \mc{B}\lp\mc{B}^{\dg}\n\mc{A}^{\dg}\m\mc{A}\n\mc{B}\lp\mc{B}^{\dg} \n\mc{A}^{\dg}\m\mc{A} = \mc{B}\lp\mc{B}^{\dg} \n \mc{A}^{\dg}\m\mc{A}
 \end{equation*}
 by pre-multiplying $\mc{B}^{\dg H}$ and post-multiplying $\mc{A}^{\dg H}$ again, simultaneously. The above equation gives $\mc{Y}\m\mc{X}\lp\mc{Y}=\mc{Y}$ by pre-multiplying $\mc{B}^{\dg}$ and post-multiplying $\mc{A}^{\dg}$. The Hermitian property of $\mc{Y}\m\mc{X}$ can be proved by using Equation \eqref{eq21}.
 Thus, $\mc{Y}\in \mc{X}\{1,2,4\}$, i.e., $\mc{B}^{\dg}\n\mc{A}^{\dg}\in (\mc{A}\n\mc{B})\{1,2,4\}$ by Definition \ref{defmpi}.
\end{proof}
It can also be proved that if $\mc{B}\lp\mc{B}^{\dg}\n\mc{A}^{H}\m\mc{A}\n\mc{B}=\mc{A}^{H}\m\mc{A}\n\mc{B}$, then $\mc{B}^{\dg}\n\mc{A}^{\dg}\in (\mc{A}\n\mc{B})\{1,2,3\}$.
Based on the above fact, Theorem \ref{rev1} can now be improved, and the improved version is presented next. 
%%%%%%%%%%%%%%%%%%%%%%%%%%%%%%%%%%%%%
\begin{theorem}\label{ivrev1}
Let $\mc{A}\in{\C}^{I_{1}\times \cdots\times I_{M}\times J_{1}\times\cdots\times J_{N}}$ and $\mc{B}\in{\C}^{J_{1}\times \cdots\times J_{N}\times K_{1}\times\cdots\times K_{L}}$. Then  $(\mc{A}\n\mc{B})^{\dg}=\mc{B}^{\dg}\n\mc{A}^{\dg}$ if and only if 
\begin{itemize}
    \item[(i)] $\mc{A}^{\dg}\m\mc{A}\n\mc{B}\lp\mc{B}^{H}\n\mc{A}^{H}=\mc{B}\lp\mc{B}^{H}\n\mc{A}^{H}$,
    \item[(ii)] $\mc{A}\n\mc{B}\lp\mc{B}^{\dg}\n\mc{A}^{\dg}$ is Hermitian.
\end{itemize}
\end{theorem}
%#########################% 
 
\begin{proof}
Let $ (\mc{A}\n\mc{B})^{\dg}  =  \mc{B}^{\dg} \n\mc{A}^{\dg}$. So $\mc{A}\n\mc{B}\lp\mc{B}^{\dg}\n\mc{A}^{\dg}=\mc{A}\n\mc{B}\lp(\mc{A}\n\mc{B})^{\dg}$ is Hermitian by the definition of the Moore--Penrose inverse of a tensor. By Theorem \ref{rolcpt}, we have $(i)$.

Conversely,   $(i)$ implies $\mc{B}^{\dg}\n\mc{A}^{\dg}\in (\mc{A}\n\mc{B})\{1,2,4\}$ by Lemma \ref{rol124} and in addition with condition $(ii)$ the claim is justified.
\end{proof}
%#########################%
 Now, another improved version of Theorem \ref{rev1} can be stated as  $(\mc{A}\n\mc{B})^{\dg}=\mc{B}^{\dg}\n\mc{A}^{\dg}$  if and only if 
 $\mc{B}\lp\mc{B}^{\dg}\n\mc{A}^{H}\m\mc{A}\n\mc{B}=\mc{A}^{H}\m\mc{A}\n\mc{B}$, and $\mc{B}^{\dg}\n\mc{A}^{\dg}\m\mc{A}\n\mc{B}$ is Hermitian.
A new characterization of the reverse-order law is proved next. 
%%%%%%%%%%%%%%%%%%%%%%%%%%%%%%%%%
\begin{theorem}
Let $\mc{A}\in{\C}^{I_{1}\times \cdots\times I_{M}\times J_{1}\times\cdots\times J_{N}}$ and $\mc{B}\in{\C}^{J_{1}\times \cdots\times J_{N}\times K_{1}\times\cdots\times K_{L}}$. Then  $(\mc{A}\n\mc{B})^{\dg}=\mc{B}^{\dg}\n\mc{A}^{\dg}$  is equivalent to 
\begin{itemize}
    \item[(i)] $(\mc{A}^{\dg}\m\mc{A}\n\mc{B})^{\dg}=\mc{B}^{\dg}\n\mc{A}^{\dg}\m\mc{A}$,
    \item[(ii)] $(\mc{A}\n\mc{B})^{\dg}=(\mc{A}^{\dg}\m\mc{A}\n\mc{B})^{\dg}\n\mc{A}^{\dg}$.
\end{itemize}
\end{theorem}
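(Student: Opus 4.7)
The plan is to prove the two directions of the equivalence in turn. The backward direction, from (i) and (ii) to the reverse-order law, is a one-line calculation: substituting (i) into (ii) yields $(\mc{A}\n\mc{B})^{\dg}=(\mc{A}^{\dg}\m\mc{A}\n\mc{B})^{\dg}\n\mc{A}^{\dg}=\mc{B}^{\dg}\n\mc{A}^{\dg}\m\mc{A}\n\mc{A}^{\dg}=\mc{B}^{\dg}\n\mc{A}^{\dg}$, where the last step uses the Moore--Penrose identity $\mc{A}^{\dg}\m\mc{A}\n\mc{A}^{\dg}=\mc{A}^{\dg}$.

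For the forward direction, I assume $(\mc{A}\n\mc{B})^{\dg}=\mc{B}^{\dg}\n\mc{A}^{\dg}$. Condition (ii) then follows from (i) by the very same one-line manipulation in reverse, so the task reduces to proving (i). Setting $\mc{X}=\mc{A}^{\dg}\m\mc{A}\n\mc{B}$ and $\mc{Y}=\mc{B}^{\dg}\n\mc{A}^{\dg}\m\mc{A}$, I will verify the four defining equations of the Moore--Penrose inverse for the pair $(\mc{X},\mc{Y})$, with the contractions $\m$, $\n$, $\lp$ chosen to match the $I$-, $J$-, and $K$-slots respectively (so that, e.g., $\mc{X}\in\C^{J_{1}\times\cdots\times J_{N}\times K_{1}\times\cdots\times K_{L}}$ and $\mc{Y}\in\C^{K_{1}\times\cdots\times K_{L}\times J_{1}\times\cdots\times J_{N}}$). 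The identities $\mc{X}\lp\mc{Y}\n\mc{X}=\mc{X}$ and $\mc{Y}\n\mc{X}\lp\mc{Y}=\mc{Y}$ will follow by pre-multiplying with $\mc{A}^{\dg}\m$, respectively post-multiplying with $\m\mc{A}$, the Moore--Penrose equations for $\mc{A}\n\mc{B}$ (which hold under the assumed reverse-order law), after collapsing the idempotency $\mc{A}^{\dg}\m\mc{A}\n\mc{A}^{\dg}\m\mc{A}=\mc{A}^{\dg}\m\mc{A}$.

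The Hermitian condition $(\mc{X}\lp\mc{Y})^{H}=\mc{X}\lp\mc{Y}$ is immediate once one recognises $\mc{X}\lp\mc{Y}=(\mc{A}^{\dg}\m\mc{A})\n(\mc{B}\lp\mc{B}^{\dg})\n(\mc{A}^{\dg}\m\mc{A})$: each factor is Hermitian by Moore--Penrose axioms (\ref{mpeq4}) for $\mc{A}$ and (\ref{mpeq3}) for $\mc{B}$, so the sandwich is Hermitian. The remaining condition $(\mc{Y}\n\mc{X})^{H}=\mc{Y}\n\mc{X}$ follows because $\mc{Y}\n\mc{X}=\mc{B}^{\dg}\n\mc{A}^{\dg}\m\mc{A}\n\mc{A}^{\dg}\m\mc{A}\n\mc{B}=\mc{B}^{\dg}\n\mc{A}^{\dg}\m\mc{A}\n\mc{B}=(\mc{A}\n\mc{B})^{\dg}\m(\mc{A}\n\mc{B})$ under the reverse-order law, and the latter is Hermitian by axiom (\ref{mpeq4}) applied to $\mc{A}\n\mc{B}$. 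The only obstacle is notational bookkeeping: one must consistently track which Einstein product contracts which block of indices, since $\mc{A}$, $\mc{B}$, and $\mc{A}\n\mc{B}$ all live in spaces of different shapes; once the index slots are matched correctly, every required identity collapses to associativity of the Einstein product combined with the four Moore--Penrose axioms for $\mc{A}$, $\mc{B}$, and $\mc{A}\n\mc{B}$.
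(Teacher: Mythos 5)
Your proposal is correct, and it takes essentially the approach the paper itself uses for the surrounding results (the paper states this particular theorem without proof): the backward direction is the one-line substitution of (i) into (ii) followed by $\mc{A}^{\dg}\m\mc{A}\n\mc{A}^{\dg}=\mc{A}^{\dg}$, and the forward direction verifies the four equations of Definition \ref{defmpi} for the pair $\mc{X}=\mc{A}^{\dg}\m\mc{A}\n\mc{B}$, $\mc{Y}=\mc{B}^{\dg}\n\mc{A}^{\dg}\m\mc{A}$ exactly in the style of Theorems \ref{mrolpT1} and \ref{Th2}. Your index bookkeeping, the idempotency collapse $\mc{A}^{\dg}\m\mc{A}\n\mc{A}^{\dg}\m\mc{A}=\mc{A}^{\dg}\m\mc{A}$, the palindromic-sandwich argument for the Hermitian property of $\mc{X}\lp\mc{Y}$, and the use of equations \eqref{mpeq1}--\eqref{mpeq4} for $\mc{A}\n\mc{B}$ under the assumed reverse-order law all check out.
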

%#########################% 

\begin{lemma}\label{rpsntfnorm}
Let~  $\mc{U}=\begin{bmatrix}\mc{U}_{1}&\mc{U}_{2}\end{bmatrix}\in{\C}^{I_{1}\times \cdots\times I_{M}\times I_{1}\times \cdots\times I_{M}}$ and $\mc{V}=\begin{bmatrix}\mc{V}_{1}&\mc{V}_{2}\end{bmatrix}\in{\C}^{J_{1}\times \cdots\times J_{N}\times J_{1}\times \cdots\times J_{N}}$ be two unitary tensors. Then, 
\begin{equation*}
    \|\mc{E}\|_{F}^{2}=\|\mc{U}_{1}^{H}\m\mc{E}\n\mc{V}_{1}\|_{F}^{2}+\|\mc{U}_{1}^{H}\m\mc{E}\n\mc{V}_{2}\|_{F}^{2}+\|\mc{U}_{2}^{H}\m\mc{E}\n\mc{V}_{1}\|_{F}^{2}+\|\mc{U}_{2}^{H}\m\mc{E}\n\mc{V}_{2}\|_{F}^{2},
\end{equation*}
for any tensor $\mc{E}\in{\C}^{I_{1}\times \cdots\times I_{M}\times J_{1}\times \cdots \times J_{N}}$.
\end{lemma}
\begin{proof}
Since the Frobenius norm is unitarily invariant, so we have
\begin{eqnarray*}
\|\mc{E}\|_{F}^{2}&=&\|\mc{U}^{H}\m\mc{E}\n\mc{V}\|_{F}^{2}\\
&=&\left\|\begin{bmatrix}U_{1}^{H}\\\mc{U}_{2}^{H}\end{bmatrix}\m\mc{E}\n\begin{bmatrix}\mc{V}_{1}&\mc{V}_{2}\end{bmatrix}\right\|_{F}^{2}\\
&=&\left\|\begin{bmatrix}\mc{U}_{1}^{H}\m\mc{E}\n\mc{V}_{1}&\mc{U}_{1}^{H}\m\mc{E}\n\mc{V}_{2}\\
\mc{U}_{2}^{H}\m\mc{E}\n\mc{V}_{1}&\mc{U}_{2}^{H}\m\mc{E}\n\mc{V}_{2}\end{bmatrix}\right\|_{F}^{2}\\
&=&\|\mc{U}_{1}^{H}\m\mc{E}\n\mc{V}_{1}\|_{F}^{2}+\|\mc{U}_{1}^{H}\m\mc{E}\n\mc{V}_{2}\|_{F}^{2}+\|\mc{U}_{2}^{H}\m\mc{E}\n\mc{V}_{1}\|_{F}^{2}+\|\mc{U}_{2}^{H}\m\mc{E}\n\mc{V}_{2}\|_{F}^{2}.
\end{eqnarray*}
\end{proof}
Let $\mc{A}, ~\mc{E}\in{\C}^{I_{1}\times \cdots\times I_{N}\times J_{1}\times \cdots \times J_{N}}$ and $\mc{B}=\mc{A}+\mc{E}$. Let $\mc{A}=\mc{U}\m\mc{D}\n\mc{V}^{H}$ and $\mc{B}=\mc{R}\m\mc{S}\n\mc{T}^{H}$ be the singular value decompositions (SVD) of $\mc{A}$ and $\mc{B}$, where $\mc{U},~\mc{R}\in{\C}^{I_{1}\times\cdots\times I_{N}\times I_{1}\times \cdots\times I_{N}}$ and $\mc{V},~\mc{T}\in{\C}^{J_{1}\times\cdots\times J_{N}\times J_{1}\times\cdots\times J_{N}}$ satisfy 
\begin{equation*}
    \mc{U}\n\mc{U}^{H}=\mc{I}_{N},~\mc{V}\n\mc{V}=\mc{I}_{N},~\mc{R}\n\mc{R}^{H}=\mc{I}_{N},~\mc{T}\n\mc{T}^{H}=\mc{I}_{N}.
\end{equation*}
Suppose that $rank_{U}(\mc{A})=r$ and $rank_{U}(\mc{B})=s$. Let us arrange the tensors involving in the SVD of $\mc{A}$ and $\mc{B}$ as the following block tensors.
\begin{eqnarray*}
    \mc{U}=\begin{bmatrix}\mc{U}_{1} &\mc{U}_{2}\end{bmatrix},~&
    \mc{D}=\begin{bmatrix}\mc{D}_{1} &\mc{O}\\
    \mc{O}&\mc{O}\end{bmatrix},& \mc{V}=\begin{bmatrix}\mc{V}_{1} &\mc{V}_{2}\end{bmatrix};\\
    \mc{R}=\begin{bmatrix}\mc{R}_{1} &\mc{R}_{2}\end{bmatrix},&  \mc{S}=\begin{bmatrix}\mc{S}_{1}~&\mc{O}\\
    \mc{O}&\mc{O}\end{bmatrix},& \mc{T}=\begin{bmatrix}\mc{T}_{1} &\mc{T}_{2}\end{bmatrix},
\end{eqnarray*}
where 
$\mc{U}_{1}\in{\C}^{I_{1}\times\cdots\times I_{N}\times K_{1}\times \cdots\times K_{N}}, ~\mc{U}_{2}\in{\C}^{I_{1}\times\cdots\times I_{N}\times L_{1}\times \cdots\times L_{M}}$, such that $K_{i}+L_{i}=I_{i}$ for $i=1,~2,~\cdots,~N$,  
$\mc{V}_{1}\in{\C}^{J_{1}\times \cdots\times J_{N}\times K_{1}\times \cdots\times K_{N}}, ~\mc{V}_{2}\in{\C}^{J_{1}\times \cdots\times J_{N}\times Q_{1}\times \cdots\times Q_{N}}$ such that $K_{i}+Q_{i}=J_{i}$ for $i=1,~2,~\cdots,~N$;
$\mc{R}_{1}\in{\C}^{I_{1}\times\cdots\times I_{N}\times \Tilde{K}_{1}\times \cdots\times \Tilde{K}_{N}}, ~\mc{R}_{2}\in{\C}^{I_{1}\times\cdots\times I_{N}\times \Tilde{L}_{1}\times \cdots\times \Tilde{L}_{N}}$, such that $\Tilde{K}_{i}+\Tilde{L}_{i}=I_{i}$ for $i=1,~2,~\cdots,~N$,
$\mc{T}_{1}\in{\C}^{J_{1}\times \cdots\times J_{N}\times \Tilde{K}_{1}\times \cdots\times \Tilde{K}_{N}}, ~\mc{T}_{2}\in{\C}^{J_{1}\times \cdots\times J_{N}\times \Tilde{Q}_{1}\times \cdots\times \Tilde{Q}_{N}}$ such that $\Tilde{K}_{i}+\Tilde{Q}_{i}=J_{i}$ for $i=1,~2,~\cdots,~N$. Here $\mc{D}_{1}\in{\C}^{K_{1}\times \cdots\times K_{N}\times K_{1}\times \cdots \times K_{N}}$ and $\mc{S}_{1}\in{\C}^{\Tilde{K}_{1}\times \cdots\times \Tilde{K}_{N}\times \Tilde{K}_{1}\times \cdots\times \Tilde{K}_{N}}$ with $K_{1}\cdot \ldots\cdot K_{N}=r$ and $\Tilde{K}_{1}\cdot\ldots\cdot \Tilde{K}_{N}=s$ are diagonal tensors defined element-wise as follow
\begin{equation*}
(\mc{D}_{1})_{k_{1}\cdots k_{N}j_{1}\cdots j_{N}}=\begin{cases}
    \sigma_{ivec{(\mathbf{k},\mathbb{K})}},&\text{if } ivec{(\mathbf{k},\mathbb{K})}=ivec{(\mathbf{j},\mathbb{J})},\\
    0,&\text{ otherwise},
    \end{cases}
\end{equation*}
and
\begin{equation*}
    (\mc{S}_{1})_{\Tilde{k}_{1}\cdots \Tilde{k}_{N}\Tilde{j}_{1}\cdots \Tilde{j}_{N}}=\begin{cases}
    \delta_{ivec{(\mathbf{\Tilde{k}},\Tilde{\mathbb{K}})}},&\text{if } ivec{(\mathbf{\Tilde{k}},\Tilde{\mathbb{K}})}=ivec{(\Tilde{\mathbf{j}},\Tilde{\mathbb{J}})},\\
    0,&\text{ otherwise},
    \end{cases}
\end{equation*}
where $\mathbf{k}:=\{k_{1},~\cdots,~k_{N}\}$, $\mathbf{j}:=\{j_{1},~\cdots,~j_{N}\}$, $\mathbf{\Tilde{k}}:=\{\Tilde{k}_{1},~\cdots,~\Tilde{k}_{M}\}$,
$\Tilde{\mathbf{j}}:=\{\Tilde{j}_{1},~\cdots,~\Tilde{j}_{N}\}$,
$\mathbb{K}:=\{K_{1},~\cdots,~K_{N}\}$, $\mathbb{J}:=\{K_{1},~\cdots,~K_{N}\}$,
$\Tilde{\mathbb{K}}:=\{\Tilde{K}_{1},~\cdots,~\Tilde{K}_{M}\}$ and $\Tilde{\mathbb{J}}:=\{\Tilde{K}_{1},~\cdots,~\Tilde{K}_{M}\}$; $\sigma_{1}\geq \sigma_{2}\geq\cdots\geq\sigma_{r}> 0$ and $\delta_{1}\geq \delta_{2}\geq\cdots\geq\delta_{s}> 0$ are singular values of $\mc{A}$ and $\mc{B}$, respectively.
By Proposition 2.4 \cite{sun2016}, we  then have
\begin{eqnarray}
    \mc{A}
    &=& \mc{U}_{1}\n\mc{D}_{1}\n\mc{V}_{1}^{H},
\end{eqnarray}
and
\begin{eqnarray}
    \mc{B}
    &=& \mc{R}_{1}\n\mc{S}_{1}\n\mc{T}_{1}^{H}.
\end{eqnarray}
So,
\begin{eqnarray}
    \mc{E}&=&\mc{B}-\mc{A}\nonumber\\
    &=& {R}_{1}\n\mc{S}_{1}\n\mc{T}_{1}^{H}-\mc{U}_{1}\n\mc{D}_{1}\n\mc{V}_{1}^{H}.
\end{eqnarray}
Thus,
\begin{eqnarray}
    \mc{R}_{1}^{H}\n\mc{E}\n\mc{V}_{1}
    &=& \mc{S}_{1}\n\mc{T}^{H}\n\mc{V}_{1}-\mc{R}_{1}^{H}\n\mc{U}_{1}\n\mc{D}_{1},\label{eqrhev}
\end{eqnarray}
and
\begin{eqnarray}
    \mc{U}_{1}^{H}\n\mc{E}\n\mc{T}_{1}
    &=&\mc{U}_{1}^{H}\n\mc{R}_{1}\n\mc{S}_{1}-\mc{D}_{1}\n\mc{V}_{1}^{H}\n\mc{T}_{1}.\label{eqrhev2}
\end{eqnarray}
From Equations \eqref{eqrhev} and \eqref{eqrhev2}, we now obtain
\begin{eqnarray}\label{altfrm}
   \mc{S}_{1}^{-1}\n\mc{R}_{1}^{H}\n\mc{E}\n\mc{V}_{1}\n\mc{D}_{1}^{-1}
   &=&\mc{T}^{H}\n\mc{V}_{1}\n\mc{D}_{1}^{-1}-\mc{S}_{1}^{-1}\n\mc{R}_{1}^{H}\n\mc{U}_{1},
\end{eqnarray}
and
\begin{eqnarray}
    \mc{D}_{1}^{-1}\n\mc{U}_{1}^{H}\n\mc{E}\n\mc{T}_{1}\n\mc{S}_{1}^{-1}
    &=&\mc{D}_{1}^{-1}\n\mc{U}_{1}^{H}\n\mc{R}_{1}-\mc{V}_{1}^{H}\n\mc{T}_{1}\n\mc{S}_{1}^{-1}.
\end{eqnarray}
Since $\mc{U}_{2}^{H}\n\mc{U}_{1}=\mc{O}$ and $\mc{T}_{1}^{H}\n\mc{T}_{2}=\mc{O}$, we get
\begin{eqnarray}
    \mc{U}_{2}^{H}\n\mc{E}\n\mc{T}_{1}
    &=&\mc{U}_{2}^{H}\n\mc{R}_{1}\n\mc{S}_{1},
\end{eqnarray}
and
\begin{eqnarray}
   \mc{U}_{1}^{H}\n\mc{E}\n\mc{T}_{2}
   &=&-\mc{D}_{1}\n\mc{V}_{1}^{H}\n\mc{T}_{2}.
\end{eqnarray}
Since $\mc{V}_{1}^{H}\n\mc{V}_{2}=\mc{O}$ and $\mc{R}_{2}^{H}\n\mc{R}_{1}=\mc{O}$, so
\begin{eqnarray}
    \mc{R}_{1}^{H}\n\mc{E}\n\mc{V}_{2}
    &=&\mc{S}_{1}\n\mc{T}_{1}^{H}\n\mc{V}_{2},
\end{eqnarray}
and
\begin{eqnarray}
    \mc{R}_{2}^{H}\n\mc{E}\n\mc{V}_{1}
    &=&-\mc{R}_{2}^{H}\n\mc{U}_{1}\n\mc{D}_{1}.
\end{eqnarray}
Also, we have
\begin{eqnarray}
   \mc{S}_{1}^{-1}\n\mc{R}_{1}^{H}\n\mc{U}_{2}
   &=&\mc{S}_{1}^{-2}\n(\mc{U}_{2}^{H}\n\mc{R}_{1}\n\mc{S}_{1})^{H}\nonumber\\
   &=&\mc{S}_{1}^{-2}\n(\mc{U}_{2}^{H}\n\mc{E}\n\mc{T}_{1})^{H},\label{eqveqn1}
\end{eqnarray}
and
\begin{eqnarray}
   -\mc{T}_{2}^{H}\n\mc{V}_{1}\n\mc{D}_{1}^{-1}
   &=&(-\mc{D}_{1}\n\mc{V}_{1}^{H}\n\mc{T}_{2})^{H}\n\mc{D}_{1}^{-2}\nonumber\\
   &=&(\mc{U}_{1}^{H}\n\mc{E}\n\mc{T}_{2})^{H}\n\mc{D}_{1}^{-2}.\label{eqveqn2}
\end{eqnarray}
The additive perturbation bounds for the Moore–Penrose inverse of arbitrary order tensors via the Einstein product by means of the Frobenius norm is obtained next.

\begin{theorem}
Let $\mc{A},~\mc{E}\in{\C}^{I_{1}\times\cdots\times I_{N}\times J_{1}\times\cdots\times J_{N}}$ and $\mc{B}=\mc{A}+\mc{E}$. If $rank_{U}(\mc{A})=r$ and $rank_{U}(\mc{B})=s$, then 
\begin{equation}
    \|\mc{B}^{\dg}-\mc{A}^{\dg}\|_{F}\leq
    % \max\left\{(rs)^{1/2}\|\mc{A}^{\dg}\|_{2}\|\mc{B}^{\dg}\|_{2},~r^{1/2}\|\mc{A}^{\dg}\|_{2}^{2},~s^{1/2}\|\mc{B}^{\dg}\|_{2}^{2}\right\}\|\mc{E}\|_{F}.
    \max\left\{\|\mc{A}^{\dg}\|_{2}\|\mc{B}^{\dg}\|_{2},~\|\mc{A}^{\dg}\|_{2}^{2},~\|\mc{B}^{\dg}\|_{2}^{2}\right\}\|\mc{E}\|_{F}.
\end{equation}
\begin{proof}
Since $\mc{A}^{\dg}=\mc{V}_{1}\n\mc{D}_{1}^{-1}\n\mc{U}_{1}^{H}$ and $\mc{B}^{\dg}=\mc{T}_{1}\n\mc{S}_{1}^{-1}\n\mc{R}_{1}^{H}$, so
\begin{eqnarray}
   \|\mc{B}^{\dg}-\mc{A}^{\dg}\|_{F}^{2}&=&\|\mc{T}_{1}\n\mc{S}_{1}^{-1}\n\mc{R}_{1}^{H}-\mc{V}_{1}\n\mc{D}_{1}^{-1}\n\mc{U}_{1}^{H}\|_{F}^{2}\nonumber\\
   &=&\|\mc{T}^{H}\n\left(\mc{T}_{1}\n\mc{S}_{1}^{-1}\n\mc{R}_{1}^{H}-\mc{V}_{1}\n\mc{D}_{1}^{-1}\n\mc{U}_{1}^{H}\right)\n\mc{U}\|_{F}^{2}\nonumber\\
   &=&\left\|\begin{bmatrix}\mc{T}_{1}^{H}\\ \mc{T}_{2}^{H}\end{bmatrix}\n\left(\mc{T}_{1}\n\mc{S}_{1}^{-1}\n\mc{R}_{1}^{H}-\mc{V}_{1}\n\mc{D}_{1}^{-1}\n\mc{U}_{1}^{H}\right)\n\begin{bmatrix}\mc{U}_{1} &\mc{U}_{2}\end{bmatrix}\right\|_{F}^{2}\nonumber\\
   &=&\left\|\begin{bmatrix}\mc{S}_{1}^{-1}\n\mc{R}_{1}^{H}\n\mc{U}_{1}-\mc{T}_{1}^{H}\n\mc{V}_{1}\n\mc{D}_{1}^{-1}&\mc{S}_{1}^{-1}\n\mc{R}_{1}^{H}\n\mc{U}_{2}\\
   -\mc{T}_{2}^{H}\n\mc{V}_{1}\n\mc{D}_{1}^{-1}&\mc{O}\end{bmatrix}\right\|_{F}^{2}\nonumber\\
   &=&\left\|\begin{bmatrix}\mc{S}_{1}^{-1}\n\mc{R}_{1}^{H}\n\mc{E}\n\mc{V}_{1}\n\mc{D}_{1}^{-1}&\mc{S}_{1}^{-2}\n\mc{T}_{1}^{H}\n\mc{E}^{H}\n\mc{U}_{2}\\
   \mc{T}_{2}^{H}\n\mc{E}^{H}\n\mc{U}_{1}\n\mc{D}^{-2}&\mc{O}\end{bmatrix}\right\|_{F}^{2}.\nonumber
\end{eqnarray}
On using properties of Frobenius norm, we obtain
\begin{eqnarray}
&&\|\mc{B}^{\dg}-\mc{A}^{\dg}\|_{F}^{2}\nonumber\\
&=& \|\mc{S}_{1}^{-1}\n\mc{R}_{1}^{H}\n\mc{E}\n\mc{V}_{1}\n\mc{D}_{1}^{-1}\|_{F}^{2}+\|\mc{S}_{1}^{-2}\n\mc{T}_{1}^{H}\n\mc{E}^{H}\n\mc{U}_{2}\|_{F}^{2}+\|\mc{T}_{2}^{H}\n\mc{E}^{H}\n\mc{U}_{1}\n\mc{D}^{-2}\|_{F}^{2}\nonumber\\
    &\leq&\dfrac{1}{\sigma_{r}^{2}\delta_{s}^{2}}\|\mc{R}_{1}^{H}\n\mc{E}\n\mc{V}_{1}\|_{F}^{2}+\dfrac{1}{\delta_{s}^{4}}\|\mc{T}_{1}^{H}\n\mc{E}^{H}\n\mc{U}_{2}\|_{F}^{2}+\dfrac{1}{\sigma_{r}^{4}}\|\mc{T}_{2}^{H}\n\mc{E}^{H}\n\mc{U}_{1}\|_{F}^{2}\nonumber\\
   &\leq&\max\left\{\dfrac{1}{\sigma_{r}^{2}\delta_{s}^{2}},\dfrac{1}{\delta_{s}^{4}},\dfrac{1}{\sigma_{r}^{4}}\right\}\left(\|\mc{R}_{1}^{H}\n\mc{E}\n\mc{V}_{1}\|_{F}^{2}+\|\mc{T}_{1}^{H}\n\mc{E}^{H}\n\mc{U}_{2}\|_{F}^{2}+\|\mc{T}_{2}^{H}\n\mc{E}^{H}\n\mc{U}_{1}\|_{F}^{2}\right).\nonumber\\\label{eqnfrst}
\end{eqnarray}
Similarly, 
\begin{eqnarray}
    \|\mc{B}^{\dg}-\mc{A}^{\dg}\|_{F}^{2}&=&\left\|\begin{bmatrix}\mc{V}_{1}^{H}\\\mc{V}_{2}^{H}\end{bmatrix}\n\left(\mc{T}_{1}\n\mc{S}_{1}^{-1}\n\mc{R}_{1}^{H}-\mc{V}_{1}\n\mc{D}_{1}^{-1}\n\mc{U}_{1}^{H}\right)\n\begin{bmatrix}\mc{R}_{1}&\mc{R}_{2}\end{bmatrix}\right\|_{F}^{2}\nonumber\\
    &=&\left\|\begin{bmatrix}\mc{V}_{1}^{H}\n\mc{T}_{1}\n\mc{S}_{1}^{-1}-\mc{D}_{1}^{-1}\n\mc{U}_{1}^{H}\n\mc{R}_{1}&-\mc{D}_{1}^{-1}\n\mc{U}_{1}^{H}\n\mc{R}_{2}\\
    \mc{V}_{2}^{H}\n\mc{T}_{1}\n\mc{S}_{1}^{-1}&\mc{O}\end{bmatrix}\right\|_{F}^{2}\nonumber\\
    &=&\left\|\begin{bmatrix}\mc{D}_{1}^{-1}\n\mc{U}_{1}^{H}\n\mc{E}\n\mc{T}_{1}\n\mc{S}_{1}^{-1}&\mc{D}_{1}^{-2}\n\mc{V}_{1}^{H}\n\mc{E}^{H}\n\mc{R}_{2}\\
    \mc{V}_{2}^{H}\n\mc{E}^{H}\n\mc{R}_{1}\n\mc{S}_{1}^{-2}&\mc{O}\end{bmatrix}\right\|_{F}^{2}\nonumber\\
    &\leq&\max\left\{\dfrac{1}{\sigma_{r}^{2}\delta_{s}^{2}},\dfrac{1}{\delta_{s}^{4}},\dfrac{1}{\sigma_{r}^{4}}\right\}\left(\|\mc{U}_{1}^{H}\n\mc{E}\n\mc{T}_{1}\|_{F}^{2}+\|\mc{V}_{1}^{H}\n\mc{E}^{H}\n\mc{R}_{2}\|_{F}^{2}+\|\mc{V}_{2}^{H}\n\mc{E}^{H}\n\mc{R}_{1}\|_{F}^{2}\right).\nonumber\\\label{eqsecnd}
\end{eqnarray}
Thus, by adding inequalities \eqref{eqnfrst} and \eqref{eqsecnd}, we get
\begin{multline*}
    2\|\mc{B}^{\dg}-\mc{A}^{\dg}\|_{F}^{2}\leq\max\left\{\dfrac{1}{\sigma_{1}^{2}\delta_{s}^{2}},\dfrac{1}{\delta_{s}^{4}},\dfrac{1}{\sigma_{r}^{4}}\right\}\left(\|\mc{R}_{1}^{H}\n\mc{E}\n\mc{V}_{1}\|_{F}^{2}+\|\mc{T}_{1}^{H}\n\mc{E}^{H}\n\mc{U}_{2}\|_{F}^{2}+\|\mc{T}_{2}^{H}\n\mc{E}^{H}\n\mc{U}_{1}\|_{F}^{2}\right.\\\left.+\|\mc{U}_{1}^{H}\n\mc{E}\n\mc{T}_{1}\|_{F}^{2}+\|\mc{V}_{1}^{H}\n\mc{E}^{H}\n\mc{R}_{2}\|_{F}^{2}+\|\mc{V}_{2}^{H}\n\mc{E}^{H}\n\mc{R}_{1}\|_{F}^{2}\right)\\
    \leq 2\max\left\{\dfrac{1}{\sigma_{r}^{2}\delta_{s}^{2}},\dfrac{1}{\delta_{s}^{4}},\dfrac{1}{\sigma_{r}^{4}}\right\}\|\mc{E}\|_{F}^{2},
\end{multline*}
due to Lemma \ref{rpsntfnorm}. Therefore,
\begin{eqnarray*}
    \|\mc{B}^{\dg}-\mc{A}^{\dg}\|_{F}&\leq&\max\left\{\dfrac{1}{\sigma_{r}\delta_{s}},\dfrac{1}{\sigma_{r}^{2}},\dfrac{1}{\delta_{s}^{2}}\right\}\|\mc{E}\|_{F}\\
    &=&\max\left\{\|\mc{A}^{\dg}\|_{2}\|\mc{B}^{\dg}\|_{2},~\|\mc{A}^{\dg}\|_{2}^{2},~\|\mc{B}^{\dg}\|_{2}^{2}\right\}\|\mc{E}\|_{F}.
\end{eqnarray*}
\end{proof}
\end{theorem}

More on perturbation theory will appear in our next work.
We would further like to bring the attention of an interested reader to the very recent work by Liu and Zhen \cite{liu2019}, where the authors provided an algorithm to compute the spectral radius of a nonnegative tensor. This will help to find an iterative solution of a tensor multilinear system of the form $\mc{A}*_N \mc{X} = \mc{B}$ where $\mc{A} \in \mathbb{R}^{I_{1}\times\cdots\times I_{M} \times J_{1} \times\cdots\times J_{N}}$ and $\mc{B} \in \mathbb{R}^{I_{1}\times\cdots\times I_{M}}$ are given. To do this, we introduce below the definition of a sub-proper splitting of a tensor $\mc{A}$.

\begin{definition}
A splitting $\mc{A}=\mc{U}-\mc{V}$ of a tensor $\mc{A}\in{\C}^{I_{1}\times \cdots\times I_{M}\times J_{1}\times \cdots \times J_{N}}$ is called a sub-proper splitting if $\mc{U}\n\mc{U}^{\dg}\m\mc{A}=\mc{A}$ and $\mc{A}\n\mc{U}^{\dg}\m\mc{U}=\mc{A}$.
\end{definition}
The next result helps us to frame the iteration scheme for solving the tensor multilinear system  $\mc{A}\n \mc{X} = \mc{B}$.

\begin{theorem}
Let $\mc{A}=\mc{U}-\mc{V}$ be a sub-proper splitting of a tensor $\mc{A}\in{\C}^{I_{1}\times \cdots\times I_{M}\times J_{1}\times \cdots \times J_{N}}$. Then 
$\mc{A}=\mc{U}\n(\mc{I}-\mc{U}^{\dg}\m\mc{V})$.
    Further, if $1\notin \Omega(\mc{U}^{\dg}\m\mc{V})$, then
     \begin{enumerate}
    \item $\mc{A}^{\dg}=(\mc{I}-\mc{U}^{\dg}\m\mc{V})^{-1}\n\mc{U}^{\dg}$;
    \item If the system $\mc{A}\n \mc{X} = \mc{B}$ is consistent, then every solution $\mc{X}$ satisfying  $\mc{U}^{\dg}\m\mc{U}\n\mc{X}=\mc{X}$ (of which at least one exists, e.g., $\mc{A}^{\dg}\m\mc{B}$) satisfies $\mc{X}=\mc{U}^{\dg}\m\mc{V}\n\mc{X}+\mc{U}^{\dg}\m\mc{B}$. 
    \end{enumerate}
\end{theorem}
\begin{proof}
For a sub-proper splitting $\mc{A}=\mc{U}-\mc{V}$ of a tensor $\mc{A}\in{\C}^{I_{1}\times \cdots\times I_{M}\times J_{1}\times \cdots \times J_{N}}$, we have $\mc{U}\n\mc{U}^{\dg}\m\mc{V}=\mc{V}$ and  $\mc{V}\n\mc{U}^{\dg}\m\mc{U}=\mc{V}$. Thus, we obtain $\mc{A}=\mc{U}\n(\mc{I}-\mc{U}^{\dg}\m\mc{V})$.

The first case of the second part follows immediately from Theorem \ref{nthm22}. Also, let $\mc{X}_{0}$ be a solution of the consistent multilinear system $\mc{A}\n \mc{X} = \mc{B}$ which satisfies $\mc{U}^{\dg}\m\mc{U}\n\mc{X}_{0}= \mc{X}_{0}$. Thus, $\mc{U}\n\mc{X}_{0}=\mc{V}\n\mc{X}_{0}+\mc{B}$. Now, by pre-multiplying $\mc{U}^{\dg}$, we obtain $\mc{X}_{0}=\mc{U}^{\dg}\m\mc{V}\n\mc{X}_{0}+\mc{U}^{\dg}\m\mc{B}$.
\end{proof}
Therefore, the iteration scheme
\begin{equation}\label{spitrsch}
        \mc{X}_{k+1}=\mc{U}^{\dg}\m\mc{V}\n\mc{X}_{k}+\mc{U}^{\dg}\m\mc{B}.
    \end{equation}
will converge to $\mc{A}^{\dg}\m\mc{B}$, the least-squares solution of minimum norm of the tensor multilinear system $\mc{A}\n\mc{X}=\mc{B}$ if the spectral radius of $\mc{U}^{\dg}\m\mc{V}$ is less than $1$.

%%%%%%%%%%%%%%%%%%%%%%%%%%%%%%%%%%%%%%%%%%
\section{Concluding remarks}
In this paper, we have  studied the problem of the reverse-order law for arbitrary order tensors. The advantage of this study is that the proofs do not use the notion of the range and the null space of a tensor. 
The important findings are summarized as
follows:

\begin{itemize}

\item The problem of the reverse-order law for arbitrary order tensors is taken up in Subsection \ref{secrolat}.

\vspace{.1cm}

\item Some new properties of a $\{1,2,3\}$-inverse and a $\{1,2,4\}$-inverse of a tensor are examined next in Subsection \ref{secrol1234}.

\vspace{.1cm}

  \item As an application of the reverse-order law, the problem of computing the additive Moore--Penrose inverse of tensors is partially attempted in subsection 3.2. The general case is left for future study and is posed as an open problem below.

 \item[] {\bf Problem:}  How to compute the Moore--Penrose inverse of the sum of two arbitrary order tensors $\mc{A}$ and $\mc{B}$?
\vspace{.1cm}

\item The notion of the Frobenius norm and the spectral norm for arbitrary order tensors are then recalled to analyze the additive perturbation bounds for the Moore–Penrose inverse of arbitrary order tensors via the Einstein product by means of Frobenius norm.  More on additive perturbation bounds using the spectral norm and the $Q$-norm will appear in our next article.

    \item Finally, we have introduced the notion of the sub-proper splitting of a tensor to find an iterative solution of a tensor multilinear system. The detailed convergence analysis of the iteration scheme  \eqref{spitrsch} can be further analyzed by the help of the reverse-order law, and is left for a future topic of research interest.
\end{itemize}
\newpage
\vspace{.6cm}
\noindent
{\small {\bf Acknowledgments.}\\
We thank the {\bf Government of India} for introducing the {\it work from home initiative} during the COVID-19 pandemic.
This research was supported by grant CRG/2018/002986 from the  Science and Engineering Research Board, Department of Science and Technology, New Delhi, India. }

\bibliographystyle{amsplain}

\end{document}